\documentclass[11pt]{amsart}
\usepackage{amsmath,amssymb,mathrsfs,color,amsthm}
\usepackage[title]{appendix}
\usepackage{amssymb}
\usepackage{array}
\usepackage{longtable}
\usepackage{tabularx}
\usepackage{booktabs}
\usepackage{graphicx}  
\usepackage{graphicx}  
\usepackage{caption}   
\usepackage{enumitem}
\usepackage{relsize}
\usepackage{graphicx}
\usepackage{xcolor}
\usepackage{listings}
\usepackage[labelfont=bf,labelsep=period]{caption}
\allowdisplaybreaks

\def\R{{\mathbb R}}
\def\P{{\mathbb P}}
\def\Q{{\mathbb Q}}
\def\E{{\mathbb E}}

\def\var{\varepsilon}

\def\maL{\mathcal{L}}
\newtheorem{thm}{Theorem}[section]
\newtheorem{cor}[thm]{Corollary}
\newtheorem{lem}[thm]{Lemma}
\newtheorem{prop}[thm]{Proposition}

\theoremstyle{definition}
\newtheorem{de}[thm]{Definition}
\theoremstyle{remark}
\newtheorem{rem}[thm]{Remark}

\newtheorem{ass}[thm]{\bfseries Assumption}

\numberwithin{equation}{section}

\newcommand{\rmd}{{\rm d}}
\newcommand{\rme}{{\rm e}}
\newcommand{\rmdiv}{{\rm div}}
\allowdisplaybreaks

\begin{document}

\title[Convergence rates of Wasserstein gradient flows]{Convergence rates of Wasserstein gradient flows for nonlinear Fokker-Planck equations with mobility and related inequalities}

\author{Zhenxin Liu}
\address{Z. Liu: School of Mathematical Sciences, Dalian University of Technology, Dalian
116024, P. R. China}
\email{zxliu@dlut.edu.cn}

\author{Xuewei Wang}
\address{X. Wang (Corresponding author): School of Mathematical Sciences, Dalian University of Technology, Dalian
116024, P. R. China}
\email{Wangxueweii@163.com}

\date{July 11, 2026}

\subjclass[2020]{35Q84, 60H10, 49Q22}

\keywords{nonlinear Fokker-Planck equation, McKean-Vlasov SDE, Wasserstein gradient flow, rate of convergence, HWI inequality.}

\begin{abstract}
For nonlinear Fokker-Planck equations with mobility, the Wasserstein gradient flow structure is described by the generalized relative entropy as the energy functional and the modified Wasserstein metric $W_h$ as the associated metric structure.
This work investigates the nonlinear effects induced by mobility and establishes the corresponding inequalities.
For nonlinear diffusion, we establish a logarithmic Sobolev inequality, which yields the convergence rate of the free energy functional and the Talagrand inequality.
By further exploiting the relationship between the weighted homogeneous Sobolev norm and the $W_h$ metric, we derive an HWI inequality relating the relative entropy, the $W_h$ metric, and the Fisher information.
In the case of mobility dependent drift and linear diffusion, the convergence rate in the $W_h$ metric is also obtained by applying the Girsanov theorem.
\end{abstract}

\maketitle

\section{Introduction}
\setcounter{equation}{0}
 In this paper, we consider the nonlinear Fokker-Planck equation
\begin{equation}\label{NFPE}
\begin{aligned}
\partial_t p_t &= {\rm div} \big(\nabla \Phi h(p_t) \big) + \Delta f(p_t),~ t\in [0, +\infty),~ x\in \R^d
\end{aligned}
\end{equation}
with initial condition $p_0$, where $h(r):=rb(r)$.
The density corresponding to the solution to the McKean-Vlasov stochastic differential equation (SDE)
\begin{equation}\label{MVSDE}
\begin{aligned}
\rmd X_t&=-\nabla \Phi(X_t) b\big(p_t( X_t)\big) \rmd t + \sqrt{\frac{2f\big(p_t( X_t)\big)}{p_t( X_t)}}\rmd W_t, ~ t \geq 0, \\
X_0&=\varsigma_0
\end{aligned}
\end{equation}
on $\R^d$ satisfies \eqref{NFPE}.
Here, $W_t$, $t\geq 0$, is a standard Brownian motion on a probability space $(\Omega, \mathcal{G}, \mathbb{P})$ with normal filtration $(\mathcal{G}_t)_{t\geq 0}$ and $\varsigma_0$ has density $p_0$.
The notation $p_t(X_t)$ denotes the value at $X_t$ of the density function $p_t$ associated with the distribution $P_t:={\rm Law}_{\P}(X_t)$. In other words, $p_t(X_t):=\frac{\rmd P_t}{\rmd x}(X_t)$, $t \geq 0$.
Under the appropriate conditions on the functions $\Phi: \R^d \rightarrow [0, +\infty)$ and $b, f:\R \rightarrow [0, +\infty)$, which respectively characterize the potential and the influence of the distribution on the drift and diffusion terms, \eqref{NFPE} admits a unique smooth solution, and \eqref{MVSDE} admits a pathwise unique strong solution (as discussed, for instance, in \cite{BR, G}). The subsequent analysis is carried out within this framework.

As stated in \cite{CHR}, mobility is often used to model the prevention of overcrowding.
It also gives rise to a particular gradient structure
\begin{equation}\label{GF}
\partial_t p_t=-{\rm  grad}_{W_h} \mathcal{F}(p_t)={\rm div}\Big( h\big(p_t \big) \nabla \big( \dfrac{\delta \mathcal{F}(p_t)}{\delta p} \big) \Big)
\end{equation}
for equation \eqref{NFPE}.
Here, the free energy functional
\begin{equation}\label{energy}
\mathcal{F}(p_t) :=\int_{\R^d} \eta (p_t) +\Phi p_t \rmd x
\end{equation}
consists of the generalized entropy $\int_{\R^d} \eta (p_t) \rmd x$ and the potential energy $\int_{\R^d} \Phi p_t \rmd x$, and the appropriate metric is the modified Wasserstein metric
\begin{equation}\label{Whdef}
\begin{aligned}
W_h^2(\mu_0, \mu_1):=\inf \Big\{& \int_0^1 \!\! \int_{\R^d} |v_t|^2 \rmd \hat{h}(\mu_t) \rmd t \Big| \partial_t \mu_t+{\rm div} \big(v_t \hat{h}(\mu_t) \big) =0  \\
& \text{ holds in the distributional sence} \Big\},
\end{aligned}
\end{equation}
where $\eta(r):=\int_0^r g(s)\rmd s := \int_0^r \int_1^s \frac{f'(\tau)}{h(\tau)} \rmd \tau \rmd s $ and
$$\mu_0, \mu_1 \in \mathcal{P}_{ac}(\R^d):= \Big\{P \text{ is a probability measure on } \R^d \text{ and } P\ll \rmd x \Big \}.$$
The map $\hat{h}$, induced by $h$, is defined on pairs of measures. Its precise definition is given in Subsection \ref{nota}.
In this setting, the classical results for the linear and nonlinear cases (see, for instance, \cite{AGS, CMV}) are not directly applicable to the convergence analysis.
The generalized relative entropy satisfies
\begin{equation}\label{Hg}
H_g(P_t|P_{\infty}) :=\int_{\R^d} \eta(p_t) -\eta(p_{\infty}) +\Phi (p_t- p_{\infty}) \rmd x=\mathcal{F}(p_t)-\mathcal{F}(p_{\infty}),
\end{equation}
where
\begin{equation}\label{pinfty}
\rmd P_{\infty}=p_{\infty} \rmd x :=g^{-1}(-\Phi+ c_0) \rmd x
\end{equation}
and the constant $c_0$ be given in Assumption \ref{reg}.
Based on this relation, we establish the convergence of the modified Wasserstein gradient flow and the related inequalities by analyzing the relationship among $H_g(P_t|P_{\infty})$, $W_h(P_t, P_{\infty})$, and the relative modified Fisher information
\begin{equation}\label{Ig}
I_g(P_t|P_{\infty})
:= \int_{\R^d} |\nabla g(p_t)+ \nabla \Phi|^2 h(p_t) \rmd x.
\end{equation}
The details of the modified Wasserstein gradient flow can be found in \cite{LW}.

However, it is highly challenging to establish these inequalities on the whole space.
On the one hand, our study focuses on the convergence of functionals that fail to be displacement convex.
Reference \cite{CLSS} examines the properties of the potential energy $\int_{\R^d} \Phi p \rmd x$ and presents an example demonstrating the failure of $W_h$-displacement convexity. As a result, methods relying on displacement convexity are not applicable to the convergence analysis, and geometric approaches are difficult to apply directly in this setting.
On the other hand, the direct calculation methods for \eqref{NFPE} or \eqref{MVSDE} used in \cite{AMTU} and \cite{FJ} are difficult to implement in the present setting. The main difficulty arises from the nonlinear structure, which prevents effective control of the integrability of $|\nabla p_t|$. Due to the presence of the mobility, terms of higher order in $|\nabla p_t|$ arise. This makes termwise verification of integrability highly difficult.
Fortunately, the square integrability of the logarithmic gradient $\frac{|\nabla p_t|}{p_t}$ with respect to $P_t$ can be guaranteed. This provides a meaningful foundation for establishing the inequalities and convergence results.

We do not use either of the two classical approaches discussed above. Instead, we restrict our analysis to a suitable admissible set. The key question is how to choose this set. It must provide effective control over the density function while ensuring that the solution remains within the set. This idea is also present in \cite{EGO} and \cite{MR}. Motivated by these considerations, together with the roles of $f$ and $b$, we restrict the initial value to lie in the set
\begin{equation}\label{A}
  \Lambda_{\var}^g:=\{ P=p\maL^d \big| \|g(p)-g(p_{\infty}) \|_{L^{\infty}} \leq \var \},
\end{equation}
and prove that $P_t\in \Lambda_{\var}^g$, where $\maL^d$ denotes the Lebesgue measure and $\var>0$ is a constant.
It also guarantees that
$$m_{\var}p_{\infty} \leq p_t \leq  M_{\var} p_{\infty}$$
for some positive constants $m_{\var}$ and $M_{\var}$.
This admissible set cannot be replaced by a ball in the $W_h$ metric, as demonstrated by the counterexample in Remark \ref{AWh}.
If $p_{\infty}$ has sufficient regularity, the desired inequalities can be established by means of a direct estimate, which then yields convergence. However, this estimate is rather rough. To obtain a sharper result, we retain the mobility $h$ throughout the estimate.

This paper establishes three main results.
The first main result is a logarithmic Sobolev inequality of the form
\begin{equation*}
H_g(P_t|P_{\infty}) \leq C_{HI} I_g(P_t|P_{\infty}),
\end{equation*}
where the constant $C_{HI}$ can be found in Theorem \ref{HI}.
Moreover, using the dissipation equation and the metric derivative for the $W_h$ metric in \cite{LW}, we obtain the convergence rate of the relative entropy
\begin{equation*}
H_g(P_t|P_{\infty}) \leq \rme^{-\frac{t}{C_{HI}}}H_g(P_0|P_{\infty}),
\end{equation*}
and the Talagrand inequality
\begin{equation*}
  W_h^2(P_t, P_{\infty}) \leq C_T H_g(P_t|P_{\infty}),
\end{equation*}
where $C_T=4C_{HI}$.
This further implies exponential convergence of the free energy functional $\mathcal{F}$ along the gradient flow \eqref{GF}, namely
\begin{equation*}
\mathcal{F}(p_t)-\mathcal{F}(p_{\infty})\leq \rme^{-\frac{t}{C_{HI}}} (\mathcal{F}(p_0)-\mathcal{F}(p_{\infty})).
\end{equation*}
The second main result is the HWI inequality. Based on the weighted homogeneous Sobolev norm
\begin{equation*}
  \|\nu\|_{\dot{H}^{-1}(\pi)}:=\sup \Big\{ |\langle \phi, \nu \rangle | \Big| \int_{\R^d} |\nabla \phi|^2 \rmd \pi \leq 1 \Big\},
\end{equation*}
and the semi-norm
\begin{equation*}
  D_{\pi}(P, Q):=\|P-Q\|_{\dot{H}^{-1}(\pi)},
\end{equation*}
we prove the HWI-like inequality
\begin{equation*}
    H_g(P_t|P_{\infty}) \leq C_{HWI}' D_{\hat{h}(P_{\infty})}(P_t, P_{\infty}) \sqrt{I_g(P_t|P_{\infty})},
  \end{equation*}
where $\pi$ is a given reference measure and
the constant $C_{HWI}'$ is given in Theorem \ref{HWI1}.
By further establishing a connection among the $\dot{H}^{-1}$ norm, $W_2$, and $W_h$, we derive the HWI inequality
\begin{equation*}
  H_g(P_t|P_{\infty}) \leq C_{HWI} W_h(P_t, P_{\infty}) \sqrt{I_g(P_t|P_{\infty})}
\end{equation*}
with the constant $C_{HWI}$ defined in Theorem \ref{HWI}.
The final main result is the convergence rate in the $W_h$ metric for the gradient flow \eqref{GF}. Since the proof relies on the Girsanov theorem, we restrict our attention to the case of mobility dependent drift and linear diffusion.
Under these conditions, the log-Harnack inequality can be established. Together with the convergence rate of the relative entropy and the Talagrand inequality,
this yields exponential convergence in the $W_h$ metric, namely
\begin{equation*}
W_h^2(P_t|P_{\infty}) \leq \rme^{-\frac{t-\tau}{C_{HI}}} C_T C_{HW} W_h^2(P_0, P_{\infty}),
\end{equation*}
where the constant $C_{HW}$ is given in Theorem \ref{WHcon}.

This work is organized as follows. Section \ref{pre} introduces the preliminaries.
Section \ref{mainnon} considers the case involving nonlinear diffusion and mobility.
In Subsection \ref{LSI}, we study the logarithmic Sobolev inequality and the convergence rate of the free energy along gradient flow \eqref{GF}.
The HWI inequality is given in Subsection \ref{mainHWI}.
The case of mobility dependent drift and linear diffusion is studied in Section \ref{lin}, where the convergence rate in the $W_h$ metric for gradient flow \eqref{GF} is established.

\section{Preliminaries}\label{pre}

\subsection{Notation and assumptions}\label{nota}
In this section, we introduce the notation and assumptions used throughout the paper, together with some results from \cite{LW}.

\begin{longtable}{@{}p{0.13\textwidth} p{0.59\textwidth} p{0.18\textwidth}@{}}
\toprule
\textbf{Symbol} & \textbf{Meaning} & \textbf{Reference} \\
\midrule
\endfirsthead

\toprule
\textbf{Symbol} & \textbf{Meaning} & \textbf{Reference} \\
\midrule
\endhead

$T$ & terminal time in $(0, +\infty)$ &  \\
$\maL^d$ & Lebesgue measure on $\R^d$ & \\
${\rm I_{d\times d}}$ & $d\times d$ identity matrix & \\
$\mathcal{M}_{ac}^+$ & space of finite, nonnegative Borel measures & \\
$\mathcal{P}_{ac}$ & space of absolutely continuous probability measures & \\
$\mathcal{P}_2$ & space of probability measures with finite second moment & \\
$\mathcal{P}_{ac, 2}$ & intersection of sets $\mathcal{P}_2 $ and $\mathcal{P}_{ac}$ & \\
$\mathcal{B}_b^+$ & space of positive, bounded Borel measurable functions & \\
$\mathcal{F}$ & free energy functional & \eqref{energy} \\
$W_h$ & modified Wasserstein metric & \eqref{Whdef} \\
$H$ & classical relative entropy & \eqref{classH} \\
$H_g$ & generalized relative entropy & \eqref{Hg} \\
$I_g$ & relative modified Fisher information & \eqref{Ig} \\
$P_{\infty}, p_{\infty}$ & invariant measure and its density & \eqref{pinfty} \\
${\rm Tan}_{\hat{h}, P_t}\mathcal{P}_{ac, 2}$ & tangent space of $\mathcal{P}_{ac, 2}$ with respect to $W_h$ at $P_t$ & Assumption \ref{met} \\
$\Lambda_{\var}^g$ & admissible subset & \eqref{A} \\
$\| \cdot \|_{\dot{H}^{-1}}(\pi)$ & weighted homogeneous Sobolev norm with respect to $\pi$ & Definition \ref{sobolev} \\
$D_{\pi}$ & semi-norm induced by $\dot{H}^{-1}(\pi)$ & \eqref{semi} \\
\bottomrule
\end{longtable}

For the smooth function $h: [0, +\infty) \rightarrow [0, +\infty)$, we define
\begin{equation*}
\begin{aligned}
\hat{h}: \mathcal{P}_{ac}(\R^d) \rightarrow \mathcal{M}_{ac}^+(\R^d), ~\mu \mapsto \hat{h}(\mu),
\end{aligned}
\end{equation*}
where $\rmd \mu=u \rmd x$ and $\rmd \hat{h}(\mu):= h(u)\rmd x$.

\begin{ass}[Regularity conditions]\label{reg} \quad
\begin{enumerate}[label=(\roman*)]
  \item \label{f} The smooth function $f:\R\rightarrow [0, +\infty)$ satisfies $f(0)=0$ and $\gamma_1 \leq f'(r) \leq \gamma_2 $ for all $r \in \R$, where $0 < \gamma_1 <\gamma_2 <+\infty$.
  \item \label{b} The smooth function $b:\R\rightarrow [0, +\infty)$ satisfies $b_1 \leq b(s) \leq b_2$ for all $s\in \R$, where $0 < b_1 <b_2<+\infty$.
  \item \label{phi} The smooth function $\Phi:\R^d\rightarrow [0, +\infty)$ satisfies that there exist constants $C, R \geq 0$ such that $|\nabla \Phi(x)|\leq C|x| $ for all $|x|>R$.
  \item \label{initial} The distribution $P(0)\in \mathcal{P}_{ac, 2}(\R^d)$ has probability density function $p_0 \in L^{\infty}(\R^d)$ such that $\mathcal{F}(p_0) <M_0$ for some constant $M_0\in \R$.
  \item \label{smooth} The nonlinear Fokker-Planck equation \eqref{NFPE} has a unique smooth solution satisfying
  $$\quad \quad p \in L^{\infty}([0,+\infty) \times \R^d), ~p_t(x)\geq 0, ~\int_{\R^d}p_t(x)\rmd x =\int_{\R^d}p_0(x)\rmd x, ~  t\in [0, +\infty),~ x\in \R^d,$$
      and admits a unique stationary solution of the form
      \begin{equation*}
      p_{\infty} =g^{-1}(-\Phi+ c_0) ~ \text{  for some } c_0 \in \R.
      \end{equation*}
  \item \label{strong} The McKean-Vlasov SDE \eqref{MVSDE} has a pathwise unique strong solution $(X_t)_{0\leq t\leq T}$.
  \item \label{poin} The measure $\mu$ satisfies Poincar\'e inequality with the Poincar\'e constant $C_P$, where $\rmd \mu:=h(p_{\infty})\rmd x$.
\end{enumerate}
\end{ass}

\begin{rem}
The validity of conditions (i)--(vi) in Assumption \ref{reg}, together with a specific example, has already been presented in \cite{LW}. Therefore, no further elaboration is necessary here. We next focus on condition (vii). Let $V:=-\ln h(p_{\infty})$, so that $\rmd \mu=\rme^{-V}\rmd x$.
By
$$\nabla p_{\infty} =-\frac{\nabla \Phi}{g'(p_{\infty})}=-\frac{h(p_{\infty}) \nabla \Phi}{f'(p_{\infty})},$$
we obtain
\begin{equation*}
\nabla V=\dfrac{h'(p_{\infty}) \nabla \Phi}{f'(p_{\infty})}
\end{equation*}
and
\begin{equation*}
  \nabla^2 V=\dfrac{h'(p_{\infty})}{f'(p_{\infty})}\nabla^2 \Phi -\dfrac{h(p_{\infty})}{f'(p_{\infty})} \Big( \dfrac{h'(p_{\infty})}{f'(p_{\infty})} \Big)' \nabla \Phi \otimes\nabla \Phi.
\end{equation*}
Proposition 2.1 in \cite{BL} and Corollary 1.6 in \cite{BBCG} provide two sufficient conditions for (vii), namely
$$\nabla^2 \Phi \geq \kappa I,~ \dfrac{h'(p_{\infty})}{f'(p_{\infty})}\geq \kappa,~ \Big( \dfrac{h'(p_{\infty})}{f'(p_{\infty})} \Big)' \leq 0 \text{ for constant } \kappa >0,$$
or
$$\dfrac{h'(p_{\infty})}{f'(p_{\infty})}\langle x, \nabla \Phi \rangle \geq \kappa |x| \text{ for all } |x|\geq R, \text{ constant } \kappa >0. $$
A direct calculation shows that $$b(r)=\frac{2+r}{1+r},~ h(r)=\frac{2r+r^2}{1+r}$$
is an example satisfying these requirements.
Equation (1.7) of \cite{BBC} provides another such example.
\end{rem}

Following the definitions of generalized entropy and modified Fisher information (see, for instance, \cite{N}),
the generalized relative entropy of $P_t$ with respect to $P_{\infty}$ is defined by
\begin{equation*}
H_g(P_t|P_{\infty}) =\int_{\R^d} \eta(p_t) -\eta(p_{\infty}) +\Phi (p_t- p_{\infty}) \rmd x,
\end{equation*}
and the relative modified Fisher information of $P_t$ with respect to $P_{\infty}$ is given by
\begin{equation*}
I_g(P_t|P_{\infty})
= \int_{\R^d} |\nabla g(p_t)+ \nabla \Phi|^2 h(p_t) \rmd x.
\end{equation*}
The modified Wasserstein metric between $\mu_0, \mu_1 \in \mathcal{P}_{ac}(\R^d)$ is defined as
\begin{equation*}
\begin{aligned}
W_h^2(\mu_0, \mu_1)=\inf \Big\{& \int_0^1 \!\! \int_{\R^d} |v_t|^2 \rmd \hat{h}(\mu_t) \rmd t \Big| \partial_t \mu_t+{\rm div} \big(v_t \hat{h}(\mu_t) \big) =0  \\
& \text{ holds in the distributional sence} \Big\}.
\end{aligned}
\end{equation*}
Further details of the modified Wasserstein metric can be found in \cite{CLSS, DNS}.
Since this notion will be used later and is useful for comparison with $H_g(P_t|P_{\infty})$, we also present the definition of the classical relative entropy of $P_t$ with respect to $P_{\infty}$ by
\begin{equation}\label{classH}
H(P_t|P_{\infty}) :=\int_{\R^d} \ln \Big(\dfrac{p_t}{p_{\infty}}\Big) \rmd P_t.
\end{equation}

Proposition 2.3 and Lemma 2.4 of \cite{LW} imply that, under Assumption \ref{reg}, we have
\begin{equation}
P_t\in \mathcal{P}_2(\R^d), \quad t\in [0, T]
\end{equation}
and
\begin{equation}
\int_0^T \!\! \int_{\R^d} \dfrac{|\nabla p_t(x)|^2}{p_t(x)} \rmd x \rmd t<+\infty.
\end{equation}
This indicates that the generalized relative entropy $H_g(P_t| P_{\infty})$ and the relative modified Fisher information $I_g(P_t| P_{\infty})$ are finite. Therefore, in the subsequent analysis, we no longer discuss the cases in which these quantities take the value $+\infty$. Without loss of generality, we also assume that $W_h(P_0, P_{\infty})<+\infty$.
Furthermore, by Corollary 3.9 of \cite{LW}, the relative entropy dissipation equation holds in the form
\begin{equation}\label{rela}
\dfrac{\rmd }{\rmd t}H_g(P_t|P_{\infty})=-I_g(P_t|P_{\infty}).
\end{equation}

To ensure the validity of the $W_h$ metric and its metric derivative, we introduce the following assumptions.

\begin{ass}[Additional conditions for the validity of $W_h$]\label{met} \quad
\begin{enumerate}[label=(\roman*)]
  \item The smooth function $h$ is increasing and concave.
  \item The field
         $$v_t\in {\rm Tan}_{\hat{h}, P_t}\mathcal{P}_{ac, 2}(\R^d):=\overline{\{\nabla \zeta : \zeta \in C_c^{\infty}(\R^d) \}}^{L^2(\R^d, \R^d ; \hat{h}(P_{t}))},$$
         where $v_t:=-\nabla \big ( g(p_t) +\Phi \big)$.
\end{enumerate}
\end{ass}

Under Assumptions \ref{reg} and \ref{met}, the Wasserstein metric derivative of the curve $t \mapsto P_t$ is given by
\begin{equation}\label{wh}
\lim_{t_0 \downarrow 0} \dfrac{W_h\big(P_{t+t_0}, P_t\big)}{t_0}
= \sqrt{I_g(P_t|P_{\infty})},~ t\in [0,T);
\end{equation}
see, e.g. Corollary 5.20 and Theorem 5.21 of \cite{DNS}, and Theorem 3.18 of \cite{LW}.

\subsection{The subset $\Lambda_{\var}^g$}

Due to the nonlinearity, it is highly challenging to establish inequalities and convergence rates on the whole space, especially for functionals that are not displacement convex. Therefore, we restrict our analysis to the set
\begin{equation}
  \Lambda_{\var}^g=\big\{ P=p\maL^d \big| \|g(p)-g(p_{\infty}) \|_{L^{\infty}} \leq \var \big\}.
\end{equation}

\begin{lem}\label{Ag}
Assume that Assumption \ref{reg} holds and that $P_0\in \Lambda_{\var}^g$. Then, for all $t\in [0, T]$, the pathwise strong solution of \eqref{MVSDE} satisfies
\begin{equation}\label{PA}
  P_t\in \Lambda_{\var}^g.
\end{equation}
Furthermore, this implies
\begin{equation}\label{bound}
m_{\var}p_{\infty} \leq p_t \leq  M_{\var} p_{\infty},
\end{equation}
where $m_{\var}:= \rme^{-\frac{b_1\var}{\gamma_2}}$ and $M_{\var}:=\rme^{\frac{b_2\var}{\gamma_1}}$.
\end{lem}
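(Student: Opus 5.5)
Set $w_t:=g(p_t)-g(p_\infty)=g(p_t)+\Phi-c_0$. The plan is a maximum principle for $w_t$: show that $\sup_x|w_t(x)|$ does not increase in time, so $\|w_t\|_{L^\infty}\le\|w_0\|_{L^\infty}\le\var$ for all $t\in[0,T]$. This is exactly \eqref{PA}, and \eqref{bound} then follows from a mean value argument applied to $g$.

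\textbf{Step 1: the equation for $w$.} Since $g'(r)=f'(r)/h(r)$, we have $\nabla f(p_t)=h(p_t)\nabla g(p_t)$. Hence \eqref{NFPE} can be written as
\begin{equation*}
\partial_t p_t={\rm div}\big(h(p_t)\nabla w_t\big).
\end{equation*}
Multiplying by $g'(p_t)=f'(p_t)/h(p_t)>0$ gives a parabolic equation for $w_t$:
\begin{equation*}
\partial_t w_t=\dfrac{f'(p_t)}{h(p_t)}\Big(h(p_t)\Delta w_t+h'(p_t)\nabla p_t\cdot\nabla w_t\Big)
= f'(p_t)\Delta w_t+\dfrac{f'(p_t)h'(p_t)}{h(p_t)}\nabla p_t\cdot\nabla w_t .
\end{equation*}
This equation has no zeroth-order term. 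At an interior spatial maximum of $w_t$ we have $\nabla w_t=0$ and $\Delta w_t\le0$, so $\partial_t w_t\le0$; symmetrically, $\partial_t w_t\ge0$ at an interior minimum. The weak maximum principle should therefore give $\sup_x w_t\le \sup_x w_0\le\var$ and $\inf_x w_t\ge\inf_x w_0\ge-\var$.

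\textbf{Step 2: rigor on $\R^d$.} This is the main obstacle: extrema need not be attained on the whole space, and the coefficient $\nabla p_t$ is not known to be bounded. Instead of a pointwise argument I would use a Stampacchia-type energy argument. Test the divergence form equation $\partial_t p_t={\rm div}(h(p_t)\nabla w_t)$ against $\psi_R\,(w_t-\var)_+$, where $\psi_R$ is a cutoff. One would like to test directly against $(w_t-\var)_+$ and conclude from
\begin{equation*}
\int \partial_t p_t\,(w_t-\var)_+\,\rmd x=-\int h(p_t)\,|\nabla (w_t-\var)_+|^2\,\rmd x\le0 .
\end{equation*}
To convert the left side into a time derivative, introduce $G(x,r):=\int_{p_\infty(x)}^{r}(g(s)-g(p_\infty(x))-\var)_+\,\rmd s$. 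Then $\partial_t\int G(x,p_t)\,\rmd x$ equals the left side above. Since $G(\cdot,p_0)\equiv0$ and $G\ge0$, this forces $G(\cdot,p_t)\equiv0$, i.e. $w_t\le\var$. The same argument with $(w_t+\var)_-$ gives $w_t\ge-\var$.

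The cutoff error terms are of the form $\int h(p_t)\nabla w_t\cdot\nabla\psi_R\,(w_t-\var)_+$. I would control them using the following facts:
\begin{itemize}
\item $p\in L^\infty$ and $h\le b_2 p$;
\item $\nabla\Phi$ grows at most linearly, and $P_t\in\mathcal P_2$;
\item $\int_0^T\!\int|\nabla p_t|^2/p_t<\infty$, together with $|\nabla g(p_t)|\le\gamma_2|\nabla p_t|/(b_1p_t)$.
\end{itemize}
These make $h(p_t)|\nabla w_t|$ integrable enough that the error terms vanish as $R\to\infty$. Where possible, I would restrict integrals to the set $\{w_t>\var\}$ to exploit these bounds. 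Alternatively, one can argue probabilistically: $w$ solves a backward Kolmogorov-type equation along \eqref{MVSDE}-like dynamics, but the energy route seems cleaner.

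\textbf{Step 3: the bounds \eqref{bound}.} From \eqref{PA} we have $|g(p_t)-g(p_\infty)|\le\var$. By Assumption \ref{reg}\ref{f},\ref{b},
\begin{equation*}
g'(r)=\dfrac{f'(r)}{r\,b(r)}\in\Big[\dfrac{\gamma_1}{b_2r},\dfrac{\gamma_2}{b_1r}\Big].
\end{equation*}
Writing $g(p_t)-g(p_\infty)=\int_{p_\infty}^{p_t}g'(s)\,\rmd s$ and comparing with $\ln(p_t/p_\infty)$, we get, when $p_t\ge p_\infty$,
\begin{equation*}
\var\ge\dfrac{\gamma_1}{b_2}\ln\dfrac{p_t}{p_\infty},
\end{equation*}
so $p_t\le \rme^{b_2\var/\gamma_1}p_\infty=M_\var p_\infty$. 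When $p_t<p_\infty$, we similarly get $\var\ge\frac{\gamma_1}{b_2}\ln(p_\infty/p_t)$. This yields a lower bound with exponent $b_2\var/\gamma_1$, rather than the stated $b_1\var/\gamma_2$. I would check whether the stated $m_\var$ arises instead from the other-side estimate $g'\le\gamma_2/(b_1r)$, i.e. $|\ln(p_t/p_\infty)|\ge\frac{b_1}{\gamma_2}|g(p_t)-g(p_\infty)|$. That inequality goes the wrong way for an upper bound on $|\ln(p_t/p_\infty)|$. So I expect the correct lower constant may be $\rme^{-b_2\var/\gamma_1}$, which is still positive and suffices for the later use, and I would reconcile the stated constants when writing the proof.
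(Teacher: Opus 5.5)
Your Step 3 diagnosis is correct: the error is in the paper, not in your argument. The paper's proof sets $a(r)=rg'(r)=f'(r)/b(r)\in[a_1,a_2]$ with $a_1=\gamma_1/b_2$, $a_2=\gamma_2/b_1$, and asserts $g(p_\infty\rme^{-\var/a_2})\le g(p_\infty)-\var$. But $g'(r)\le a_2/r$ yields $g(p_\infty)-g(p_\infty\rme^{-\var/a_2})\le\var$, which is the reverse inequality. Using the lower bound $g'(r)\ge a_1/r$ on both sides, what is provable is $\rme^{-b_2\var/\gamma_1}p_\infty\le p_t\le\rme^{b_2\var/\gamma_1}p_\infty$.

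The stated $m_\var$ is not merely unproved; it is false in general. Take $f(r)=r$, $b\equiv1$, with the admissible non-sharp constants $b_1=\tfrac12$, $\gamma_2=2$. Then $g=\ln$ and $\Lambda_\var^g=\{\rme^{-\var}p_\infty\le p\le\rme^{\var}p_\infty\}$. Any $P_0\in\Lambda_\var^g$ with $p_0(x_0)=\rme^{-\var}p_\infty(x_0)$ violates $p_0\ge\rme^{-\var/4}p_\infty=m_\var p_\infty$ already at $t=0$. So state and prove the lemma with $m_\var:=\rme^{-b_2\var/\gamma_1}$. Its later uses ($C_w$, $C_{HWI}'$, $C_{HWI}$) only change constants.

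For \eqref{PA} you take a genuinely different route. The paper derives the same equation for $\phi_t=g(p_t)-g(p_\infty)$ as your Step 1 and runs the pointwise maximum principle. It takes a first time $t_*$ at which the bound is exceeded and a point $x_*$ where the maximum is attained; then $\nabla\phi=0$ and $\Delta\phi\le0$ give $\partial_t\phi\le0$ there. This is short, but on $\R^d$ it tacitly assumes the supremum is attained. Moreover, $\partial_t\phi\le0$ at a first crossing is no contradiction without a strict perturbation.

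Your Stampacchia functional $\int G(x,p_t)\,\rmd x$ needs neither assumption. The price is the cutoff error, which you control correctly for the upper bound. Indeed, $(w_t-\var)_+\le (g(\|p\|_{L^\infty})-c_0)_++\Phi\lesssim1+|x|^2$ and $h(p_t)|\nabla w_t|\le\gamma_2|\nabla p_t|+b_2p_t|\nabla\Phi|$. Hence the error is $\lesssim\int_0^t\!\int_{|x|\ge R}(|\nabla p_s|\,|x|+p_s|x|^2)\,\rmd x\,\rmd s$. This vanishes by Cauchy--Schwarz against the time-integrated Fisher bound, provided $\int_0^T\!\int p_s|x|^2\,\rmd x\,\rmd s<\infty$.

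The lower bound is not literally ``the same argument''. The function $(w_t+\var)_-$ has no a priori growth bound, since $g(p_t)\to-\infty$ as $p_t\to0$, so the error terms are not controlled as written. Truncate instead: test with $\min\{(w_t+\var)_-,K\}$ and use
$G_K(x,r):=\int_r^{p_\infty(x)}\min\{(g(s)-g(p_\infty(x))+\var)_-,K\}\,\rmd s\ge0$, which vanishes exactly when $g(r)\ge g(p_\infty(x))-\var$. The dissipation term is then $-\int_{\{-\var-K<w_t<-\var\}}\psi_R\,h(p_t)|\nabla w_t|^2\le0$. The cutoff error is $\lesssim K\int_0^t\!\int_{R\le|x|\le2R}(R^{-1}|\nabla p_s|+p_s)\,\rmd x\,\rmd s\to0$. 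Since $G_K(\cdot,p_0)\equiv0$, this gives $w_t\ge-\var$. The same truncation would also let you dispense with the quadratic bound in the upper-bound argument.
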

\begin{proof}
Since $g$ is smooth and increasing, we have
\begin{equation*}
  \Lambda_{\var}^g=\big\{ P=p\maL^d \big| p_{\infty, c-\var} \leq p \leq p_{\infty, c+\var},~ p_{\infty, r}:=g^{-1}(-\Phi+r) \big\}.
\end{equation*}
Denote $\phi_t:=g(p_t)-g(p_{\infty})$. Then $\nabla \phi_t=g'(p_t)\nabla p_t+\nabla \Phi$ and
\begin{equation*}
  \partial_t \phi_t=g'(p_t) \partial_t p_t =g'(p_t) {\rm div}(h(p_t) \nabla \phi_t)
  =g'(p_t)h(p_t)\Delta \phi_t +g'(p_t) h'(p_t)\nabla p_t \cdot \nabla \phi_t .
\end{equation*}
If \eqref{PA} does not hold, then there exists a time $t_*$ such that $\|\phi_{t_{*}}\|_{L^{\infty}} > \var$.
Without loss of generality, let $t_*$ be the first time at which the maximum value is attained, and suppose that this maximum is achieved at $x_*$. Thus, $\phi_{t_*}(x_*)>\var$.
Using $\nabla \phi_{t_*}(x_*)=0$ and $\Delta \phi_{t_*}(x_*)\leq 0$, we derive
$$\partial_t \phi_t(x)\big|_{t=t_*, x=x_*} =g'(p_{t_*}(x_*))h(p_{t_*}(x_*))\Delta \phi_{t_*}(x_*) \leq 0, $$
which contradicts the choice of $t_*$ and $x_*$.
Therefore, we have
$$\phi_t(x)\leq \var, ~t\in [0,T],~ x\in \R^d. $$
By the same argument, we obtain
$$\phi_t(x)\geq -\var, ~t\in [0,T],~ x\in \R^d. $$
Consequently,
$$\|\phi_t \|_{L^{\infty}}\leq \var,$$
which completes the proof of \eqref{PA}.

Denote $a(r):=rg'(r)=\frac{f'(r)}{b(r)}$. Assumption \ref{reg} ensures that $a_1:=\frac{\gamma_1}{b_2}\leq a(r) \leq \frac{\gamma_2}{b_1}=:a_2$.
A direct calculation shows that
\begin{equation*}
\begin{aligned}
g(p_{\infty} \rme^{\frac{\var}{a_1}})= & \int_1^{p_{\infty}} \dfrac{f'(r)}{rb(r)} \rmd r  + \int_{p_{\infty}}^{p_{\infty} \rme^{\frac{\var}{a_1}}} \dfrac{f'(r)}{rb(r)} \rmd r \\
\geq & g(p_{\infty}) +a_1 \int_1^{\rme^{\frac{\var}{a_1}}} \dfrac{1}{r} \rmd r \\
= & g(p_{\infty})+\var
\end{aligned}
\end{equation*}
and
\begin{equation*}
\begin{aligned}
g(p_{\infty} \rme^{-\frac{\var}{a_2}})= & \int_1^{p_{\infty}} \dfrac{f'(r)}{rb(r)} \rmd r  + \int_{p_{\infty}}^{p_{\infty} \rme^{-\frac{\var}{a_2}}} \dfrac{f'(r)}{rb(r)} \rmd r \\
\leq & g(p_{\infty}) +a_2 \int_1^{\rme^{-\frac{\var}{a_2}}} \dfrac{1}{r} \rmd r \\
= & g(p_{\infty})-\var.
\end{aligned}
\end{equation*}
Thus, \eqref{PA} implies that
\begin{equation*}
g(p_{\infty} \rme^{-\frac{\var}{a_2}})\leq g(p_{\infty})-\var \leq g(p_t) \leq g(p_{\infty})+\var \leq  g(p_{\infty} \rme^{\frac{\var}{a_1}}).
\end{equation*}
Using the smoothness and monotonicity of $g$, we obtain \eqref{bound}.
\end{proof}

\begin{rem}\label{AWh}
The set $\Lambda_{\var}^g$ cannot be replaced by $B_{\delta}^{W_h}$, a ball in the $W_h$ metric. Indeed, there exists a counterexample showing that $B_{\delta}^{W_h} \subset \Lambda_{\var}^g$ does not hold for any $\delta >0$, even in a linear case.
Let $f(r)=r$, $b(r)=1$, and $\Phi(x)=\frac{|x|^2}{2}$. Then $g(r)=\ln r$, $p_{\infty}(x)=(2\pi)^{-\frac{d}{2}} \rme ^{-\frac{|x|^2}{2}}$ and
$$\Lambda_{\var}^g=\big\{ P=p\maL^d \big| \rme ^{-\var} p_{\infty} \leq p \leq \rme^{\var} p_{\infty} \big\}.$$

Define the translated density $p_{\infty}^L(x):=p_{\infty}(x -L\vec{a})$, where $L>0$ and $\vec{a}$ is a prescribed direction.
Set the initial value as $p^L_0:=(1-\var_L)p_{\infty} +\var_L p_{\infty}^L$, where $\var_L=\frac{\delta^2}{4L^2}$.
Then
\begin{equation*}
  W_h(p^L_0, p_{\infty}) =W_2(p^L_0, p_{\infty}) \leq \sqrt{ \var_L }W_2(p_{\infty}^L, p_{\infty})=\frac{\delta}{2}.
\end{equation*}

According to the properties of the Ornstein-Uhlenbeck semigroup (see, for instance, Section 2.7.1 of \cite{BGL}), the density function at $t\in [0,T]$ is given by
\begin{equation*}
p^L_t=(1-\var_L)p_{\infty} +\var_L p_{\infty}^{\rme^{-tL}}
\end{equation*}
and
\begin{equation*}
\begin{aligned}
  \dfrac{p_t^L}{p_{\infty}} =&1-\var_L +\var_L \rme^{-\frac{|x-\rme^{-t}L\vec{a}|^2}{2}+\frac{|x|^2}{2}} \\
  =&1-\var_L +\var_L \rme^{Lx\cdot\vec{a}\rme^{-t}-\frac{1}{2}\rme^{-2t}L^2} \rightarrow +\infty, ~ \text{ as } x\cdot\vec{a}\rightarrow +\infty,
\end{aligned}
\end{equation*}
which implies that
$p_t^L \maL^d \notin \Lambda_{\var}^g$ for any $\var>0$.
\end{rem}

\section{Mobility dependent drift and nonlinear diffusion}\label{mainnon}

Lemma \ref{Ag} shows that the solutions with initial condition in $\Lambda_{\var}^g$ remain in this set.
In what follows, we restrict our analysis to the set $\Lambda_{\var}^g$ and establish the corresponding inequalities.

\subsection{Logarithmic Sobolev inequality and convergence rate of the free energy functional}\label{LSI}
We now establish the logarithmic Sobolev inequality, which, together with \eqref{rela}, yields the convergence rate of the relative entropy $H_g(P_t|P_{\infty})$.
As in \cite{CCF}, we employ the Polyak-${\L}$ojasiewicz inequality to obtain the desired result.

\begin{thm}[Logarithmic Sobolev inequality]\label{HI}
  Suppose that Assumption \ref{reg} holds. Then, for any $P_0\in \Lambda_{\var}^g$,
  \begin{equation}\label{HIineq}
    H_g(P_t|P_{\infty}) \leq C_{HI} I_g(P_t|P_{\infty}),~ t\in[0,T),
  \end{equation}
  where $C_{HI}:=\dfrac{C_HC_{PL}}{C_I}$. The constants $C_H$, $C_I$ and $C_{PL}$ are defined in \eqref{CH}, \eqref{CI} and \eqref{C3}.
\end{thm}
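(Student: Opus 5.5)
The plan is to compare both $H_g$ and $I_g$ with quadratic quantities in the variable $\phi_t:=g(p_t)-g(p_{\infty})$, which by Lemma \ref{Ag} satisfies $\|\phi_t\|_{L^\infty}\le\var$ and $m_{\var}p_{\infty}\le p_t\le M_{\var}p_{\infty}$. The Poincar\'e inequality of Assumption \ref{reg}\ref{poin} for $\rmd\mu=h(p_\infty)\rmd x$ then closes the argument; this is the Polyak--\L ojasiewicz step.

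\textbf{Step 1: upper bound for $H_g$ (constant $C_H$).} Since $\eta'=g$ and $g(p_\infty)=-\Phi+c_0$, and the masses of $p_t$ and $p_\infty$ agree, I would write
\begin{equation*}
H_g(P_t|P_\infty)=\int_{\R^d}\big[\eta(p_t)-\eta(p_\infty)-g(p_\infty)(p_t-p_\infty)\big]\rmd x .
\end{equation*}
This is a Bregman divergence. Taylor expansion gives the integrand $\tfrac12 g'(\xi)(p_t-p_\infty)^2$ with $\xi$ between $p_t$ and $p_\infty$. Since $p_t-p_\infty=\int g'(\cdot)^{-1}\rmd\phi$, we have $|p_t-p_\infty|\le |\phi_t|\sup_\xi 1/g'(\xi)$, and $1/g'(\xi)=h(\xi)/f'(\xi)$. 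On the admissible range $\xi\in[m_\var p_\infty,M_\var p_\infty]$, the bound $h(r)=rb(r)$ with $b_1\le b\le b_2$ gives $h(\xi)\le (b_2M_\var/b_1)h(p_\infty)$. Hence
\begin{equation*}
H_g\le C_H\int_{\R^d}\phi_t^2\,\rmd\mu
\end{equation*}
with $C_H$ explicit in $\gamma_1,\gamma_2,b_1,b_2,m_\var,M_\var$. One must also subtract the mean: replace $\phi_t$ by $\phi_t-c$ with $c$ the $\mu$-average. The Bregman form is invariant under adding constants to the linear term, because $\int(p_t-p_\infty)\rmd x=0$, so I would redo the Taylor bound with $g(p_\infty)+c$. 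If that is awkward, I would instead use the elementary inequality $\eta(a)-\eta(b)-\eta'(b)(a-b)\le (g(a)-g(b))(a-b)$ together with $\int c(p_t-p_\infty)=0$ to get $\int(\phi_t-c)(p_t-p_\infty)$, and then Cauchy--Schwarz combined with the pointwise bound $|p_t-p_\infty|\lesssim h(p_\infty)|\phi_t|$.

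\textbf{Step 2: lower bound for $I_g$ (constant $C_I$).} Since $\nabla g(p_t)+\nabla\Phi=\nabla\phi_t$, we have $I_g=\int|\nabla\phi_t|^2h(p_t)\rmd x\ge C_I\int|\nabla\phi_t|^2\rmd\mu$, with $C_I=m_\var b_1/b_2$ coming from the comparison $h(p_t)\ge (b_1 m_\var/b_2) h(p_\infty)$.

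\textbf{Step 3: Poincar\'e (constant $C_{PL}$).} Assumption \ref{reg}\ref{poin} gives $\int(\phi_t-\bar\phi_t)^2\rmd\mu\le C_P\int|\nabla\phi_t|^2\rmd\mu$, so $C_{PL}$ is essentially $C_P$. Chaining Steps 1--3 yields $H_g\le C_HC_{PL}C_I^{-1}I_g$.

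The main obstacle I expect is Step 1 with the mean removed. One has to produce a genuinely quadratic bound in the centered variable $\phi_t-\bar\phi_t$ weighted by $\mu$, and this requires $\mu$ to be finite, which follows from $h(p_\infty)\le b_2p_\infty$. It also requires the mass constraint to absorb the constant. I would try the Cauchy--Schwarz route first: it gives $H_g\le \big(\int(\phi_t-c)^2\rmd\mu\big)^{1/2}\big(\int (p_t-p_\infty)^2/h(p_\infty)\big)^{1/2}$. I would then bound the second factor by $H_g^{1/2}$, using the lower bound $g'\ge f'/h$ in the Bregman Taylor remainder, and cancel to obtain the quadratic estimate.
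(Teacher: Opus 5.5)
Your argument is correct. It differs from the paper mainly in where the mean of $\phi_t$ is removed, and it yields a different constant from the one named in the statement.

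\textbf{Comparison with the paper.} The paper first bounds $H_g$ by the uncentered quantity $\int\phi_t^2\,\rmd\mu$. It writes $H_g=\int K(\phi_t)\,\rmd x$ with $K'(\phi)=\phi A(\phi)$ and Taylor-expands $A$; this brings in $C_M$ and gives the leading constant $1/(2\gamma_1)$ as $\varepsilon\to0$. Only afterwards does it use mass conservation, as the orthogonality relation $\int\tilde g(\xi_t)\phi_t\,\rmd\mu=0$ (with $\xi_t$ from the mean value theorem), to bound $|\bar\phi_t|$ by $\|\phi_t-\bar\phi_t\|_{L^2(\mu)}$. This conversion is what inflates $C_{PL}$ well beyond $C_P$.

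You instead put mass conservation into the entropy itself:
$H_g\le\int\phi_t(p_t-p_\infty)\,\rmd x=\int(\phi_t-c)(p_t-p_\infty)\,\rmd x$.
This is the same device as \eqref{es1} in Theorem \ref{HWI1}. You then apply Cauchy--Schwarz and absorb the second factor using $\int(p_t-p_\infty)^2h(p_\infty)^{-1}\rmd x\le\frac{2b_2M_\varepsilon}{\gamma_1b_1}H_g$. With $c=\mu(\R^d)^{-1}\int\phi_t\,\rmd\mu$ this gives $H_g\le\frac{2b_2M_\varepsilon}{\gamma_1b_1}\int(\phi_t-c)^2\,\rmd\mu$, so Poincar\'e applies with no conversion step. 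Combined with $h(p_t)\ge\frac{b_1m_\varepsilon}{b_2}h(p_\infty)$, you end with $H_g\le\frac{2b_2^2M_\varepsilon}{\gamma_1b_1^2m_\varepsilon}C_P\,I_g$.

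Your route is shorter and needs neither $C_M$, $C_M'$ nor the mean-value point $\xi_t$. Its constant, however, rests on the crude comparisons $m_\varepsilon p_\infty\le p_t\le M_\varepsilon p_\infty$ and $b_1\le b\le b_2$, in the spirit of the remark following the theorem. The paper keeps the sharp $1/(2\gamma_1)$ in Step 1 but then loses it again in $C_{PL}$.

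\textbf{Constants.} Strictly, you prove the inequality with your own constant, not with $C_{HI}$ built from \eqref{CH}, \eqref{CI}, \eqref{C3}. You reuse the names $C_H$, $C_I$, $C_{PL}$ for different quantities, and the two final constants are not comparable in general. This is no great loss. In the paper's Step 2, $|\frac{\rmd}{\rmd\phi}\ln\Pi|\le C_M'$ only yields $h(p_t)\ge\rme^{-C_M'\varepsilon}h(p_\infty)$, so the extra factor $C_M'$ in \eqref{CI} is unjustified when $C_M'>1$.

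\textbf{A sharper version of your first idea.} Your first suggestion, the shifted linear term, can be made precise and then gives the best of both routes. A plain Taylor expansion at $p_\infty$ would leave the pointwise term $-c(p_t-p_\infty)$. Instead set $\tilde p:=g^{-1}(g(p_\infty)+c)$ with $|c|\le\varepsilon$, and let $D$ be the Bregman divergence of $\eta$. Then
$\eta(p_t)-\eta(p_\infty)-g(\tilde p)(p_t-p_\infty)=D(p_t,\tilde p)-D(p_\infty,\tilde p)\le D(p_t,\tilde p)=\int_c^{\phi_t}(u-c)A(u)\,\rmd u$.
The paper's bound $A\le\rme^{C_M\varepsilon}A(0)\le\rme^{C_M\varepsilon}h(p_\infty)/\gamma_1$ then gives $H_g\le\frac{\rme^{C_M\varepsilon}}{2\gamma_1}\int(\phi_t-c)^2\,\rmd\mu$. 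Hence $H_g\le\frac{\rme^{(C_M+C_M')\varepsilon}}{2\gamma_1}C_P\,I_g$, which keeps the sharp leading constant and avoids any inflation of $C_P$.
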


\begin{proof}
Recalling
$$\phi_t=g(p_t)-g(p_{\infty}), \|\phi_t\|_{L^{\infty}}\leq \var \text{ and } \partial_t \phi_t=g'(p_t)\rmdiv(h(p_t)\nabla \phi_t),$$
we rewrite $p_t$ as $$p_t=g^{-1}(g(p_{\infty})+\phi_t).$$

We next prove the theorem in four steps, using the Poincar\'e inequality and the Polyak-${\L}$ojasiewicz inequality.

\textbf{Step 1.}
Estimate of relative entropy
\begin{equation}\label{Hineq}
  H_g(P_t|P_{\infty}) \leq C_H \int_{\R^d} \phi_t^2 h(p_{\infty}) \rmd x,~ \forall t \in [0,T),
\end{equation}
where
\begin{equation}\label{CH}
C_M:=\sup_{\|\phi\|_{L^{\infty}} \leq \var}\frac{|h'(r)|}{\gamma_1}+\frac{|f''(r)h(r)|}{\gamma_1^2}\Big|_{r=g^{-1}(g(p_{\infty})+\phi)},~ C_H:=\Big(\frac{1}{2\gamma_1}+\frac{C_M \rme^{C_M \var}\var}{3\gamma_1}\Big).
\end{equation}

Denote
$$K(\phi_t) =K(p_t)\big|_{p_t=g^{-1}(g(p_{\infty})+\phi_t)}:=\eta(p_t)-\eta(p_{\infty})-g(p_{\infty})(p_t-p_{\infty})\big|_{p_t=g^{-1}(g(p_{\infty})+\phi_t)}.$$
Then
\begin{equation}\label{Kder}
\begin{aligned}
  \dfrac{\rmd}{\rmd \phi} K(\phi) =& \dfrac{\rmd}{\rmd p}K(p) \cdot \dfrac{\rmd p}{\rmd \phi}\Big|_{p=g^{-1}(g(p_{\infty})+\phi)}\\
  =& (\eta'(p)-g(p_{\infty})) (g^{-1})'(g(p_{\infty})+\phi)|_{p=g^{-1}(g(p_{\infty})+\phi)}
\end{aligned}
\end{equation}
and
$$H_g(P_t|P_{\infty})=\int_{\R^d} K(\phi_t) \rmd x.$$
Let
$A(\phi):=(g^{-1})'(g(p_{\infty})+\phi)$. Then
$\dfrac{\rmd}{\rmd \phi} K(\phi)=\phi A(\phi)$.

According to the inverse function theorem, we obtain
\begin{equation*}
  A(0)=(g^{-1})'(g(p_{\infty}))=\dfrac{1}{g'(p_{\infty})}=\dfrac{h(p_{\infty})}{f'(p_{\infty})}
\end{equation*}
and
\begin{equation*}
A(\phi)=\dfrac{h(r)}{f'(r)}\Big|_{r=g^{-1}(g(p_{\infty})+\phi)}.
\end{equation*}
Now we want to show $|A'(\phi_t)|\leq C_M \rme^{C_M \var} A(0)$.
From the estimate $\|\phi_t\|_{L^{\infty}}\leq \var$, we have
\begin{equation}\label{claim1}
\begin{aligned}
  \Big|\dfrac{\rmd}{\rmd \phi_t}\ln A(\phi_t)\Big|
  =&\Big| \dfrac{\rmd}{\rmd r} \ln \dfrac{h(r)}{f'(r)} \cdot \dfrac{\rmd r}{\rmd \phi}\Big|_{r=g^{-1}(g(p_{\infty})+\phi_t)} \Big| \\
  =&\Big| \dfrac{f'(r)}{h(r)} \Big( \dfrac{h'(r)f'(r)-h(r)f''(r)}{(f'(r))^2} \Big)\cdot \dfrac{1}{g'(r)}\Big|_{r=g^{-1}(g(p_{\infty})+\phi_t)} \Big| \\
  \leq &\sup_{\|\phi_t\|_{L^{\infty}} \leq \var}\dfrac{|h'(r)|}{\gamma_1}+\dfrac{|h(r) f''(r)|}{\gamma_1^2}\Big|_{r=g^{-1}(g(p_{\infty})+\phi_t)}.
\end{aligned}
\end{equation}
Since $g$, $h$ and $f$ are smooth functions and $g$ is strictly increasing, we have
$$C_M=\sup_{\|\phi\|_{L^{\infty}} \leq \var}\frac{|h'(r)|}{\gamma_1}+\frac{|f''(r)h(r)|}{\gamma_1^2}\Big|_{r=g^{-1}(g(p_{\infty})+\phi)}< +\infty.$$
Consequently,
\begin{equation}
\rme^{-C_M\var}A(0)\leq A(\phi_t) \leq \rme^{C_M\var} A(0).
\end{equation}
Combining $A'(\phi)=A(\phi)\cdot\frac{\rmd}{\rmd \phi}\ln A(\phi)$, we obtain
\begin{equation}\label{equa2}
  |A'(\phi_t)|\leq C_M |A(\phi_t)|\leq C_M \rme^{C_M \var}A(0)=\dfrac{ C_M \rme^{C_M \var}}{g'(p_{\infty})}.
\end{equation}

The integral $K(\phi)=\int_0^{\phi} y A(y) \rmd y$ implies that
$$H_g(P_t|P_{\infty})=\int_{\R^d} \int_0^{\phi_t} y A(y) \rmd y \rmd x.$$
By Taylor's formula
\begin{equation*}
A(y)=A(0)+\int_0^y A'(r) \rmd r =\dfrac{1}{g'(p_{\infty})} +\int_0^y A'(r) \rmd r,
\end{equation*}
we have
\begin{equation*}
  H_g(P_t|P_{\infty})=\int_{\R^d} \int_0^{\phi_t}  \dfrac{y}{g'(p_{\infty})}\rmd y \rmd x +\int_{\R^d} \int_0^{\phi_t} y \int_0^y A'(r) \rmd r \rmd y \rmd x.
\end{equation*}

Owing to \eqref{equa2} and $\|\phi_t\|_{L^{\infty}}\leq \var$, we obtain
\begin{equation*}
\begin{aligned}
& \Big|H_g(P_t|P_{\infty})-\dfrac{1}{2}\int_{\R^d} \dfrac{1}{g'(p_{\infty})} \phi_t^2 \rmd x\Big|  \\
\leq & \int_{\R^d} \int_0^{\phi_t} \dfrac{C_M \rme^{C_M \var}}{g'(p_{\infty})} y^2 \rmd y \rmd x
\leq  \dfrac{C_M \rme^{C_M \var} \var}{3} \int_{\R^d} \dfrac{\phi_t^2}{g'(p_{\infty})} \rmd x .
\end{aligned}
\end{equation*}
It follows that
\begin{equation*}
\begin{aligned}
 H_g(P_t|P_{\infty})
 \leq & \dfrac{1}{2} \int_{\R^d} \dfrac{h(p_{\infty})}{f'(p_{\infty})} \phi_t^2 \rmd x +\dfrac{C_M \rme^{C_M \var} \var}{3} \int_{\R^d} \int_{\R^d} \dfrac{h(p_{\infty})}{f'(p_{\infty})} \phi_t^2 \rmd x \\
 \leq & \Big(\dfrac{1}{2\gamma_1}+\dfrac{C_M \rme^{C_M \var} \var}{3\gamma_1}\Big) \int_{\R^d} \phi_t^2 h(p_{\infty}) \rmd x \\
 = & C_H \int_{\R^d} \phi_t^2 h(p_{\infty}).
 \end{aligned}
\end{equation*}

\textbf{Step 2.}
Estimate of Fisher information
\begin{equation}\label{Iineq}
  I_g(P_t|P_{\infty})\geq C_I\int_{\R^{d}} |\nabla \phi_t|^2 h(p_{\infty}) \rmd x,
\end{equation}
where
\begin{equation}\label{CI}
C_M':=\sup_{\|\phi\|_{L^{\infty}} \leq \var}\frac{|h'(r)|}{\gamma_1}\Big|_{r=g^{-1}(g(p_{\infty})+\phi)},~C_I:=C_M'\rme^{-C_M'\var}.
\end{equation}

Let us denote
$$\Pi(\phi):=h(r)\big|_{r=g^{-1}(g(p_{\infty})+\phi)}.$$
A direct calculation yields $\Pi(0)=h(p_{\infty})$.
Similar to \eqref{claim1} , we obtain
\begin{equation*}
\begin{aligned}
  \Big| \dfrac{\rmd }{\rmd \phi_t} \ln \Pi(\phi_t) \Big|
  =&\Big| \dfrac{\rmd }{\rmd r} \ln h(r) \cdot \dfrac{\rmd r}{\rmd \phi}\Big|_{r=g^{-1}(g(p_{\infty})+\phi_t)} \Big| \\
  =&\Big| \dfrac{h'(r)}{h(r)}\cdot \dfrac{1}{g'(r)}\Big|_{r=g^{-1}(g(p_{\infty})+\phi_t)} \Big| \\
  \leq& \sup_{\|\phi_t\|_{L^{\infty}} \leq \var}\dfrac{|h'(r)|}{\gamma_1}\Big|_{r=g^{-1}(g(p_{\infty})+\phi_t)}.
\end{aligned}
\end{equation*}
The smoothness of $h$ and $g$, together with the monotonicity of $g$, implies that
$$C_M'=\sup_{\|\phi\|_{L^{\infty}} \leq \var}\frac{|h'(r)|}{\gamma_1}\Big|_{r=g^{-1}(g(p_{\infty})+\phi)}< +\infty.$$
Therefore, we obtain
\begin{equation*}
C_M'\rme^{-C_M'\var}\Pi(0)\leq \Pi(\phi_t) \leq C_M'\rme^{C_M' \var}\Pi(0),
\end{equation*}
which means
$$C_M'\rme^{-C_M'\var} h(p_{\infty}) \leq h(p_t) \leq C_M'\rme^{C_M' \var} h(p_{\infty}).$$
It follows that
\begin{equation*}
  I_g(P_t|P_{\infty})=\int_{\R^d} |\nabla \phi_t|^2 h(p_t)\rmd x \geq C_I \int_{\R^d} |\nabla \phi_t|^2 h(p_{\infty})\rmd x.
\end{equation*}

\textbf{Step 3.}
Using the Poincar\'e inequality to derive the Polyak-${\L}$ojasiewicz inequality
\begin{equation}\label{add3}
\int_{\R^d} \phi_t^2 h(p_{\infty}) \rmd x \leq C_{PL} \int_{\R^d} |\nabla \phi_t|^2 h(p_{\infty}) \rmd x,
\end{equation}
where
\begin{equation}\label{C3}
C_{PL}:=\Big( 1+ \frac{\gamma_2 b_2^3 \int_{\R^d} (1+M_{\var}) \rmd \mu}{\gamma_1^2 b_1^3} \Big)^2 C_P.
\end{equation}

Using Lagrange mean value theorem, there exists a function $\bar{\xi}:\R^d \rightarrow [0,1]$ such that
\begin{equation*}
  \phi_t=g(p_t)-g(p_{\infty})=g'\big(\bar{\xi} p_{\infty}+(1-\bar{\xi}) p_t\big)(p_t-p_{\infty}).
\end{equation*}
We denote $\xi_t:=\bar{\xi} p_{\infty}+(1-\bar{\xi}) p_t$.
Then
\begin{equation}\label{equa3}
  (\bar{\xi} +(1-\bar{\xi})m_{\var}) p_{\infty} \leq \xi_t \leq (\bar{\xi} +(1-\bar{\xi})M_{\var}) p_{\infty}.
\end{equation}
In the case $g(p_t)=g(p_{\infty})$, the proof is trivial. From now on, we assume that $\phi_t\ne0$.
Then we have
$$g'(\xi_t)=\frac{\phi_t}{p_t-p_{\infty}}.$$
Let
\begin{equation*}
\tilde{g}(r):=
 \frac{1}{g'(r)h(p_{\infty})} 
\end{equation*}
and
\begin{equation*}
  \bar{\phi}_t:=\int_{\R^d} \phi_t \rmd \mu.
\end{equation*}
Thus,
\begin{equation*}
\begin{aligned}
  0 =&\int_{\R^d} p_t-p_{\infty} \rmd x
  =\int_{\R^d} \dfrac{\phi_t}{g'(\xi_t)} \rmd x \\
  =&\int_{\R^d} \dfrac{\phi_t}{g'(\xi_t) h(p_{\infty})} \rmd \mu \\
  =&\int_{\R^d} \tilde{g}(\xi_t)(\phi_t-\bar{\phi}_t) \rmd \mu + \int_{\R^d} \tilde{g}(\xi_t)\bar{\phi} \rmd \mu,
\end{aligned}
\end{equation*}
which implies
\begin{equation}\label{add2}
\begin{aligned}
  \Big|\int_{\R^d} \tilde{g}(\xi_t) \rmd \mu\Big| \cdot\Big| \bar{\phi_t}\Big|=\Big|\langle \phi_t-\bar{\phi}_t,  \tilde{g}(\xi_t) \rangle_{L^2(\mu)}\Big|.
\end{aligned}
\end{equation}
It can be concluded through
\begin{equation*}
 \dfrac{h(\xi_t)}{h(p_{\infty})} \leq \dfrac{b_2 \xi_t}{b_1 p_{\infty}} \leq \dfrac{b_2 \bar{\xi}}{b_1}+ \dfrac{b_2(1-\bar{\xi})M_{\var}}{b_1} < +\infty
\end{equation*}
that
\begin{equation}
\| \tilde{g}(\xi_t) \|_{L^2(\mu)}^2 =\int_{\R^d} \dfrac{(h(\xi_t))^2}{(f'(\xi_t))^2 (h(p_{\infty}))^2} \rmd \mu
\leq \int_{\R^d} \dfrac{b_2^2}{\gamma_1^2 b_1^2}(\bar{\xi} +(1-\bar{\xi})M_{\var}) \rmd \mu
< +\infty
\end{equation}
and
\begin{equation*}
  \Big|\int_{\R^d} \tilde{g}(\xi_t) \rmd \mu\Big| =\Big|\int_{\R^d} \dfrac{h(\xi_t)}{f'(\xi_t) h(p_{\infty}) } \rmd x\Big|
  \geq \int_{\R^d} \dfrac{b_1}{\gamma_2} \xi_t \rmd x
  = \dfrac{b_1}{\gamma_2} \ne 0.
\end{equation*}
By the Minkowski inequality, \eqref{add2} and the H\"older inequality, we deduce
\begin{equation*}
\begin{aligned}
  \| \phi_t \|_{L^2(\mu)} \leq & \| \phi_t-\bar{\phi}_t \|_{L^2(\mu)} +\| \bar{\phi}_t \|_{L^2(\mu)} \\
  \leq & \| \phi_t-\bar{\phi}_t \|_{L^2(\mu)} + |\bar{\phi}_t|\cdot\int_{\R^d}1\rmd \mu \\
  \leq & \| \phi_t-\bar{\phi}_t \|_{L^2(\mu)} + b_2 \dfrac{| \langle \phi_t-\bar{\phi}_t , \tilde{g} (\xi_t) \rangle_{L^2(\mu)} |}{|\int_{\R^d} \tilde{g}(\xi_t) \rmd \mu|}\\
  \leq & \| \phi_t-\bar{\phi}_t \|_{L^2(\mu)} + b_2 \dfrac{\| \tilde{g}(\xi_t) \|_{L^2(\mu)}}{|\int_{\R^d} \tilde{g}(\xi_t) \rmd \mu|}\| \phi_t-\bar{\phi}_t \|_{L^2(\mu)},
\end{aligned}
\end{equation*}
which implies
\begin{equation*}
  \int_{\R^d} \phi_t^2 \rmd \mu \leq \Big( 1+ b_2 \dfrac{\| \tilde{g}(\xi_t) \|_{L^2(\mu)}}{|\int_{\R^d} \tilde{g}(\xi_t) \rmd \mu|} \Big)^2 \int_{\R^d} (\phi_t-\bar{\phi}_t)^2 \rmd \mu.
\end{equation*}
Applying the Assumption \ref{reg} (vii), we obtain
\begin{equation*}
 \int_{\R^d} \phi_t^2 \rmd \mu \leq \Big( 1+ b_2 \dfrac{\| \tilde{g}(\xi_t) \|_{L^2(\mu)}}{|\int_{\R^d} \tilde{g}(\xi_t) \rmd \mu|} \Big)^2 C_P \int_{\R^d} |\nabla \phi_t|^2 h(p_{\infty})\rmd x
 \leq C_{PL} \int_{\R^d} |\nabla \phi_t|^2 h(p_{\infty})\rmd x.
\end{equation*}

\textbf{Step 4.} The final estimate
$$H_g(P_t|P_{\infty}) \leq C_{HI} I_g(P_t|P_{\infty}).$$

In view of \eqref{Hineq}, \eqref{add3} and \eqref{Iineq},
we conclude that
\begin{equation*}
\begin{aligned}
  H_g(P_t|P_{\infty}) \leq&  C_H \int_{\R^d} \phi_t^2 h(p_{\infty}) \rmd x\\
  \leq &  C_H C_{PL} \int_{\R^d} |\nabla \phi_t|^2 h(p_{\infty})\rmd x \\
  \leq & \dfrac{C_H C_{PL}}{C_I} I_g(P_t|P_{\infty})
  =C_{HI} I_g(P_t|P_{\infty}).
\end{aligned}
\end{equation*}

\end{proof}

\begin{rem}
In fact, one can also directly use \eqref{bound} to derive the estimate $H_g(P_t|P_{\infty}) \leq \frac{\gamma_2 M_{\var}^2b_2^2}{2 b_1^2 m_{\var} \gamma_1^2}C_P'I_g(P_t|P_{\infty})$, where $C_P'$ is the Poincar\'e constant for $P_{\infty}$. However, this estimate is simpler but less precise. In contrast, our estimate takes into account the influence of the nonlinear term $h$, making it more accurate.
\end{rem}

\begin{cor}[Convergence rate of the relative entropy]
  Suppose that Assumption \ref{reg} holds. Then, for any $P_0\in \Lambda_{\var}^g$, we have the convergence rate of the relative entropy
  \begin{equation}\label{Hconver}
    H_g(P_t|P_{\infty}) \leq \rme^{-\frac{t}{C_{HI}}}H_g(P_0|P_{\infty}), ~ t\in[0,T)
  \end{equation}
  and the convergence rate of the free energy
  \begin{equation}\label{freeconver}
  \mathcal{F}(p_t)-\mathcal{F}(p_{\infty})\leq \rme^{-\frac{t}{C_{HI}}} (\mathcal{F}(p_0)-\mathcal{F}(p_{\infty})).
  \end{equation}
\end{cor}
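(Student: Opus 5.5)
The argument is the standard Grönwall step that combines the entropy dissipation identity \eqref{rela} with the logarithmic Sobolev inequality of Theorem \ref{HI}. Since $P_0\in \Lambda_{\var}^g$, Lemma \ref{Ag} gives $P_t\in\Lambda_{\var}^g$ for all $t\in[0,T]$, so Theorem \ref{HI} applies at every time $t\in[0,T)$:
\begin{equation*}
I_g(P_t|P_{\infty})\geq \frac{1}{C_{HI}} H_g(P_t|P_{\infty}).
\end{equation*}
Inserting this into \eqref{rela} yields the differential inequality
\begin{equation*}
\frac{\rmd}{\rmd t}H_g(P_t|P_{\infty})\leq -\frac{1}{C_{HI}}H_g(P_t|P_{\infty}),\quad t\in[0,T).
\end{equation*}

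To integrate this rigorously, I would set $u(t):=\rme^{t/C_{HI}}H_g(P_t|P_{\infty})$. Then $u'(t)=\rme^{t/C_{HI}}\big(\tfrac{1}{C_{HI}}H_g-I_g\big)\leq 0$, so $u$ is nonincreasing and $u(t)\leq u(0)$, which is exactly \eqref{Hconver}. The estimate \eqref{freeconver} follows immediately from the identity $H_g(P_t|P_{\infty})=\mathcal{F}(p_t)-\mathcal{F}(p_{\infty})$ in \eqref{Hg}, applied at times $t$ and $0$.

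The only step needing care is the regularity of $t\mapsto H_g(P_t|P_{\infty})$. By Corollary 3.9 of \cite{LW}, \eqref{rela} holds at least in integrated form, i.e. $H_g(P_t|P_\infty)-H_g(P_s|P_\infty)=-\int_s^t I_g(P_r|P_\infty)\rmd r$ with $I_g$ integrable in time. This makes $H_g$ absolutely continuous, so the monotonicity of $u$ follows from $u'\leq 0$ almost everywhere. All quantities are finite by the integrability bounds recalled in Section \ref{pre}, so no infinite cases arise. Since $T$ is arbitrary, the bounds hold on the whole time interval considered.
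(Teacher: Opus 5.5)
Your proposal is correct and follows the paper's proof: combine the dissipation identity \eqref{rela} with the logarithmic Sobolev inequality \eqref{HIineq}, apply Gr\"onwall (your integrating factor $\rme^{t/C_{HI}}$ is just that step written out), and obtain \eqref{freeconver} from the identity $H_g(P_t|P_{\infty})=\mathcal{F}(p_t)-\mathcal{F}(p_{\infty})$ in \eqref{Hg}. Your remark on the absolute continuity of $t\mapsto H_g(P_t|P_{\infty})$ adds care that the paper leaves implicit.
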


\begin{proof}
  Combining \eqref{rela} and \eqref{HIineq}, we obtain
  \begin{equation*}
    \dfrac{\rmd}{\rmd t}H_g(P_t|P_{\infty})=-I_g(P_t|P_{\infty}) \leq -\dfrac{1}{C_{HI}} H_g(P_t|P_{\infty}).
  \end{equation*}
The Gronwall inequality implies \eqref{Hconver}.
It follows from \eqref{Hg} that \eqref{freeconver} also holds.
\end{proof}

\begin{cor}[Talagrand inequality]
  Suppose that Assumptions \ref{reg} and \ref{met} hold. Then, for any $P_0\in \Lambda_{\var}^g$, we have the Talagrand inequality
  \begin{equation}\label{Tineq}
    W_h^2(P_t, P_{\infty}) \leq C_T H_g(P_t|P_{\infty}), ~ t\in[0,T),
  \end{equation}
  where $C_T:=4C_{HI}$.
  Moreover,
  \begin{equation}\label{WH}
   W_h^2(P_t, P_{\infty}) \leq C_T \rme^{-\frac{t}{C_{HI}}}H_g(P_0|P_{\infty}), ~ t\in[0,T).
  \end{equation}
\end{cor}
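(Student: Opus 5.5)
The plan is the standard Otto--Villani argument: combine the metric derivative \eqref{wh} with the dissipation identity \eqref{rela} and the logarithmic Sobolev inequality \eqref{HIineq}. Fix $t\in[0,T)$ and follow the flow from time $t$ onward. The curve $s\mapsto P_s$, $s\ge t$, stays in $\Lambda_{\var}^g$ by Lemma \ref{Ag}, so Theorem \ref{HI} holds along it. Set $\mathcal H(s):=H_g(P_s|P_{\infty})$. Then \eqref{rela} gives
\[
\frac{\rmd}{\rmd s}\sqrt{\mathcal H(s)}=-\frac{I_g(P_s|P_{\infty})}{2\sqrt{\mathcal H(s)}}\le -\frac{\sqrt{I_g(P_s|P_{\infty})}}{2\sqrt{C_{HI}}}
\]
whenever $\mathcal H(s)>0$, using $\sqrt{\mathcal H}\le \sqrt{C_{HI}}\sqrt{I_g}$ from \eqref{HIineq}. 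If $\mathcal H(s)=0$ at some time, then $P_s=P_\infty$ from then on and the claim is trivial there.

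Next I bring in the metric. By the triangle inequality for $W_h$ and \eqref{wh}, the upper right Dini derivative satisfies $\limsup_{s_0\downarrow0}\frac{W_h(P_{s+s_0},P_t)-W_h(P_s,P_t)}{s_0}\le \sqrt{I_g(P_s|P_\infty)}$. Integrating from $t$ to $S$ gives
\[
W_h(P_t,P_S)\le \int_t^S\sqrt{I_g(P_s|P_\infty)}\,\rmd s\le 2\sqrt{C_{HI}}\big(\sqrt{\mathcal H(t)}-\sqrt{\mathcal H(S)}\big)\le 2\sqrt{C_{HI}}\sqrt{\mathcal H(t)}.
\]
Justifying this integration requires that $s\mapsto W_h(P_s,P_t)$ be locally absolutely continuous, or at least that a Dini-derivative comparison lemma applies. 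I plan to get this from $\sqrt{I_g}\in L^1_{loc}$, which holds since $\int I_g\,\rmd s\le\mathcal H(t)$ by \eqref{rela}, together with the metric-derivative framework of \cite{DNS,LW}: a curve with $L^1$ metric speed is absolutely continuous and its length dominates the distance.

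The remaining and main obstacle is letting $S\to\infty$ to get $W_h(P_t,P_S)\to W_h(P_t,P_\infty)$. The paper works on $[0,T)$ with $T$ arbitrary, so I would take $T$ large and use the uniform bound above, which is independent of $S$. From the corollary on the relative entropy, $\mathcal H(S)\to0$ exponentially. Since $P_S\in\Lambda_{\var}^g$, Step 1 and Step 3 of the proof of Theorem \ref{HI} (in reverse) give $\int\phi_S^2h(p_\infty)\,\rmd x\to0$, and therefore $p_S\to p_\infty$ in a weighted $L^2$ sense. I then need $W_h(P_S,P_\infty)\to0$. The cleanest route is the Cauchy property: the displayed estimate applied between $S$ and $S'$ gives $W_h(P_S,P_{S'})\le 2\sqrt{C_{HI}\mathcal H(S)}\to0$, so $(P_S)$ is $W_h$-Cauchy. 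Its limit must be $P_\infty$, by the lower semicontinuity of $W_h$ under weak convergence (\cite{DNS}) and the identification of the weak limit from the convergence of $\phi_S$.

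With this, $W_h(P_t,P_\infty)\le\liminf_S W_h(P_t,P_S)\le 2\sqrt{C_{HI}\mathcal H(t)}$. Squaring gives \eqref{Tineq} with $C_T=4C_{HI}$, and inserting \eqref{Hconver} gives \eqref{WH}.
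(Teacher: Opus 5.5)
Your proposal is correct and follows the paper's argument: bound $W_h(P_t,P_{t_0})$ by $\int_t^{t_0}\sqrt{I_g(P_s|P_{\infty})}\,\rmd s$, use \eqref{HIineq} and \eqref{rela} to dominate this by $2\sqrt{C_{HI}}\big(\sqrt{H_g(P_t|P_{\infty})}-\sqrt{H_g(P_{t_0}|P_{\infty})}\big)$, and let $t_0\to+\infty$. Your extra justification of the length bound and of the limit $t_0\to+\infty$ fills in what the paper only asserts; that limit uses lower semicontinuity of $W_h$ together with $P_{t_0}\to P_{\infty}$ weakly. For the weak convergence, the lower bound $A\geq \rme^{-C_M\var}A(0)$ from Step 1 is what you need, not a ``reverse'' of Step 3, which does not hold.
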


\begin{proof}
By the definition of $W_h$ with \eqref{HIineq}, we deduce that, for $t_0 \geq t$,
\begin{equation*}
\begin{aligned}
  W_h(P_t, P_{t_0}) \leq & \int_t^{t_0} \dfrac{I_g(P_s|P_{\infty})}{\sqrt{I_g(P_s|P_{\infty})}} \rmd s \\
  \leq & \sqrt{C_{HI}}\int_t^{t_0} \dfrac{I_g(P_s|P_{\infty})}{\sqrt{H_g(P_s|P_{\infty})}} \rmd s \\
  \leq & -2\sqrt{C_{HI}}\int_t^{t_0} \dfrac{\rmd}{\rmd s}\sqrt{H_g(P_s|P_{\infty})}\rmd s \\
  \leq & 2\sqrt{C_{HI}} (\sqrt{H_g(P_t|P_{\infty})} -\sqrt{H_g(P_{t_0}|P_{\infty})}).
\end{aligned}
\end{equation*}
Letting $t_0\rightarrow +\infty$, we conclude that
\begin{equation*}
  W_h(P_t, P_{\infty}) \leq 2\sqrt{C_{HI}} \sqrt{H_g(P_t|P_{\infty})},
\end{equation*}
which implies
\begin{equation*}
  W_h^2(P_t, P_{\infty}) \leq  C_{T} H_g(P_t|P_{\infty}) \leq C_T \rme^{-\frac{t}{C_{HI}}}H_g(P_0|P_{\infty}).
\end{equation*}

\end{proof}

\subsection{HWI inequality}\label{mainHWI}
We now derive the HWI inequality by using the relationship among $W_2$, $W_h$ and $\dot{H}^{-1}$ norm.
We first introduce the weighted homogeneous Sobolev norm and a seminorm induced by $\dot{H}^{-1}$ (see, for instance, \cite{CCGT, P}).

\begin{de}[Weighted homogeneous Sobolev norm]\label{sobolev}
  For a given positive measure $\pi$, the {\em weighted homogeneous Sobolev norm} of any signed measure $\nu$ is defined by
  \begin{equation*}
    \|\nu\|_{\dot{H}^{-1}(\pi)}:=\sup \Big\{ |\langle \phi, \nu \rangle | \Big| \| \phi\|_{\dot{H}^1(\pi)} \leq 1 \Big\}, \text{ where }  \| \phi\|_{\dot{H}^1(\pi)}:=\int_{\R^d} |\nabla \phi|^2 \rmd \pi.
  \end{equation*}
  When $\pi$ is the Lebesgue measure $\maL^d$, we simply write $| \cdot |_{\dot{H}^{-1}}$.
\end{de}

\begin{prop}\label{normprop}
  For a given positive measure $\pi$ and probability measures $P$ and $Q$ with densities $p$ and $q$, respectively, we define the semi-norm by
  \begin{equation}\label{semi}
    D_{\pi}(P, Q):=\|P-Q\|_{\dot{H}^{-1}(\pi)}.
  \end{equation}
  Moreover, $p-q=-\rmdiv(r \nabla \phi)$ and $\int_{\R^d} |\nabla \phi|^2 r \rmd x\ne 0$ imply
  \begin{equation*}
    D_{\pi}(P, Q)=\Big(\int_{\R^d} |\nabla \phi|^2 r \rmd x\Big)^{\frac{1}{2}}.
  \end{equation*}
\end{prop}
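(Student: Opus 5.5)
Presumably $r$ is the density of $\pi$, so $\rmd\pi=r\,\rmd x$, and $\|\phi\|_{\dot H^1(\pi)}$ is read as the seminorm $(\int|\nabla\phi|^2\rmd\pi)^{1/2}$. The first claim, that $D_\pi$ is a semi-norm, follows directly from the definition: $\|\cdot\|_{\dot H^{-1}(\pi)}$ is a supremum of absolute values of linear functionals $\nu\mapsto\langle\phi,\nu\rangle$. Hence it is absolutely homogeneous and satisfies the triangle inequality. Consequently $D_\pi$ is symmetric, $D_\pi(P,P)=0$, and $D_\pi(P,Q)\le D_\pi(P,R)+D_\pi(R,Q)$. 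It may fail to separate points (e.g.\ if $\dot H^1(\pi)$ is too small), which is why it is only a semi-norm.

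For the identity, set $A:=\big(\int_{\R^d}|\nabla\phi|^2 r\,\rmd x\big)^{1/2}>0$. For an admissible test function $\psi$ (smooth and compactly supported, then extended by density) with $\int|\nabla\psi|^2 r\,\rmd x\le1$, we integrate by parts using $p-q=-\rmdiv(r\nabla\phi)$:
\begin{equation*}
\langle\psi,P-Q\rangle=-\int_{\R^d}\psi\,\rmdiv(r\nabla\phi)\,\rmd x=\int_{\R^d}\nabla\psi\cdot\nabla\phi\, r\,\rmd x .
\end{equation*}
Cauchy--Schwarz in $L^2(r\,\rmd x)$ gives $|\langle\psi,P-Q\rangle|\le A$, so $D_\pi(P,Q)\le A$. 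For the reverse inequality we choose $\psi=\phi/A$, which is admissible with norm exactly $1$. It gives $\langle\psi,P-Q\rangle=A^2/A=A$, and hence $D_\pi(P,Q)=A$.

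The main obstacle is justifying the integration by parts and the admissibility of $\psi=\phi/A$. Here $\phi$ need not be compactly supported or even in $L^1(|p-q|)$, so boundary terms at infinity must vanish. I would handle this with a cutoff $\chi_R\phi$, $\chi_R(x)=\chi(x/R)$. The extra term $\int\phi\nabla\chi_R\cdot\nabla\phi\,r$ has to be controlled, and this requires $\phi\in L^2(r)$ on annuli, or some Poincar\'e-type control in the spirit of Assumption \ref{reg}~(vii). Alternatively, I would define the pairing $\langle\phi,P-Q\rangle$ through the weak formulation $\int\nabla\phi\cdot\nabla\psi\,r$, taking the closure of $C_c^\infty$ in $\dot H^1(\pi)$ as the test class. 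Then the identity holds by density, and the supremum is attained at the normalized representative $\phi/A$, exactly as a Riesz representation argument in the Hilbert space $\dot H^1(\pi)$.
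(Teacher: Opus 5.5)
Your proof is essentially the paper's: integrate by parts using $p-q=-\rmdiv(r\nabla\phi)$, apply Cauchy--Schwarz to get $D_\pi(P,Q)\le A$, and test with $\psi=\phi/A$ for the reverse inequality. The paper silently takes $\rmd\pi=r\,\rmd x$ and does not justify the vanishing boundary terms or the admissibility of $\phi/A$, so your treatment of those points and of the semi-norm property adds to the paper's argument without changing its approach.
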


\begin{proof}
  On the one hand, Definition \ref{sobolev} and $p-q=-\rmdiv(r \nabla \phi)$ give that
  \begin{equation*}
  \begin{aligned}
    D_{\pi}(P, Q) =& \sup \Big\{ \int_{\R^d} \psi (p-q) \rmd x \Big| \int_{\R^d} |\nabla \psi|^2 r\rmd x \leq 1 \Big\} \\
    =& \sup \Big\{ \int_{\R^d} \nabla \psi \cdot \nabla \phi r \rmd x \Big| \int_{\R^d} |\nabla \psi|^2 r\rmd x \leq 1 \Big\}.
  \end{aligned}
  \end{equation*}
  Therefore, we have
  \begin{equation*}
  D_{\pi}(P, Q) \leq \Big(\int_{\R^d} |\nabla \phi|^2 r\rmd x\Big)^{\frac{1}{2}} \Big( \int_{\R^d} |\nabla \psi|^2 r\rmd x \Big)^{\frac{1}{2}} \leq \Big(\int_{\R^d} |\nabla \phi|^2 r\rmd x\Big)^{\frac{1}{2}}
  \end{equation*}
  with the help of the H\"older inequality and  $\int_{\R^d} |\nabla \psi|^2 r\rmd x \leq 1$.

On the other hand, let
\begin{equation*}
  \psi:=\dfrac{\phi}{\big(\int_{\R^d} |\nabla \phi|^2 r \rmd x \big)^{\frac{1}{2}}},
\end{equation*}
which satisfies $\int_{\R^d} |\nabla \psi|^2 r\rmd x = 1$.
Then
\begin{equation*}
  \int_{\R^d} \nabla \psi \cdot \nabla \phi r \rmd x =\Big(\int_{\R^d} |\nabla \phi|^2 r\rmd x\Big)^{\frac{1}{2}},
\end{equation*}
which means that
\begin{equation*}
  D_{\pi}(P, Q) \geq \Big(\int_{\R^d} |\nabla \phi|^2 r\rmd x\Big)^{\frac{1}{2}}.
\end{equation*}
The proof is complete.
\end{proof}

We next introduce an HWI-like inequality. Instead of using the $W_h$ metric, we employ the $\dot{H}^{-1}$ norm.

\begin{thm}[HWI-like inequality]\label{HWI1}
  Under Assumption \ref{reg}, for any $P_0\in \Lambda_{\var}^g$, we have
  \begin{equation}\label{HWIlike}
    H_g(P_t|P_{\infty}) \leq C_{HWI}' D_{\hat{h}(P_{\infty})}(P_t, P_{\infty}) \sqrt{I_g(P_t|P_{\infty})},~ t\in[0, T),
  \end{equation}
  where $C_{HWI}':=\dfrac{b_2 \gamma_2 C_H}{b_1 m_{\var}\sqrt{C_I}}$.
\end{thm}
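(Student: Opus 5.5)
The plan is to route everything through the quadratic quantity $\int_{\R^d}\phi_t^2 h(p_{\infty})\rmd x$, with $\phi_t=g(p_t)-g(p_{\infty})$ as in the proof of Theorem \ref{HI}. Step 1 of that proof already gives $H_g(P_t|P_{\infty})\le C_H\int_{\R^d}\phi_t^2 h(p_{\infty})\rmd x$. So it suffices to show
\begin{equation*}
\int_{\R^d}\phi_t^2 h(p_{\infty})\rmd x\le \frac{b_2\gamma_2}{b_1 m_{\var}}\, D_{\hat h(P_{\infty})}(P_t,P_{\infty})\Big(\int_{\R^d}|\nabla\phi_t|^2h(p_{\infty})\rmd x\Big)^{\frac12},
\end{equation*}
and then to invoke Step 2, i.e. \eqref{Iineq}, which gives $(\int|\nabla\phi_t|^2h(p_{\infty}))^{1/2}\le C_I^{-1/2}\sqrt{I_g(P_t|P_{\infty})}$. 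Together these produce exactly the constant $C_{HWI}'$.

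For the displayed inequality I will use the duality in Definition \ref{sobolev} with $\pi=\hat h(P_\infty)$. Testing against $\psi=\phi_t$ gives
\begin{equation*}
\Big|\int_{\R^d}\phi_t(p_t-p_{\infty})\rmd x\Big|\le D_{\hat h(P_\infty)}(P_t,P_\infty)\Big(\int_{\R^d}|\nabla\phi_t|^2 h(p_\infty)\rmd x\Big)^{\frac12}.
\end{equation*}
The convention $\|\psi\|_{\dot H^1(\pi)}\le1$ is read as the squared norm, which is equivalent after rescaling. Strictly, $\phi_t$ is not compactly supported; but it is bounded with $\nabla\phi_t\in L^2(\hat h(P_\infty))$ by Step 2 and finiteness of $I_g$, so a standard cutoff approximation should justify using it as a test function.

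It remains to compare $\int\phi_t(p_t-p_\infty)\rmd x$ from below with $\int\phi_t^2h(p_\infty)\rmd x$. By the mean value representation of Step 3, $\phi_t(p_t-p_\infty)=\phi_t^2/g'(\xi_t)$ pointwise, which is nonnegative. Using $1/g'(r)=h(r)/f'(r)\ge b_1 r/\gamma_2$ and the lower bound $\xi_t\ge m_{\var}p_\infty$ from \eqref{equa3} and \eqref{bound}, we get $1/g'(\xi_t)\ge b_1m_{\var}p_\infty/\gamma_2$. Since $h(p_\infty)\le b_2p_\infty$, this yields $1/g'(\xi_t)\ge \frac{b_1m_{\var}}{b_2\gamma_2}h(p_\infty)$. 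Hence
\begin{equation*}
\int\phi_t^2h(p_\infty)\rmd x\le \frac{b_2\gamma_2}{b_1m_{\var}}\int\phi_t(p_t-p_\infty)\rmd x,
\end{equation*}
which closes the chain. On the set where $\phi_t=0$ the pointwise inequality is trivial.

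The main obstacle is this pointwise comparison. The constant must come out as stated, so the bounds on $\xi_t$ have to be taken in the right direction, using $\bar\xi+(1-\bar\xi)m_{\var}\ge m_{\var}$ since $m_{\var}\le1$. Justifying $\phi_t$ as an admissible test function is routine by comparison.
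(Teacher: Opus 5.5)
Your proof is correct and is essentially the paper's argument. It uses the same pointwise bound $\phi_t(p_t-p_{\infty})\geq \frac{b_1 m_{\var}}{b_2\gamma_2}\,\phi_t^2 h(p_{\infty})$ (the paper's $w_t\geq C_w$), chained with \eqref{Hineq} and \eqref{Iineq}. The only difference is that you bound $\int_{\R^d}\phi_t(p_t-p_{\infty})\,\rmd x$ directly from the dual definition of $D_{\hat h(P_{\infty})}$, with $\phi_t$ (after a cutoff) as the test function, whereas the paper introduces a potential $\varphi_t$ solving $p_t-p_{\infty}=-\rmdiv(h(p_{\infty})\nabla\varphi_t)$, integrates by parts, applies Cauchy--Schwarz and identifies $D$ via Proposition \ref{normprop}; your route has the small advantage of not needing the existence of $\varphi_t$.
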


\begin{proof}
As in Step 3 of Theorem \ref{HI}, we consider only the case $\phi_t \neq 0$.
Denote
\begin{equation}\label{w}
  w_t:=\dfrac{p_t-p_{\infty}}{\phi_t h(p_{\infty})}.
\end{equation}
Then
\begin{equation*}
  w_t=\dfrac{p_t-p_{\infty}}{h(p_{\infty})(g(p_t)-g(p_{\infty}))}
  \geq \dfrac{1}{b_2 p_{\infty} g'(\xi_t)}
  \geq \dfrac{\bar{\xi}+(1-\bar{\xi})m_{\var}}{b_2 \xi_t g'(\xi_t)}\geq \dfrac{b_1(\bar{\xi}+(1-\bar{\xi})m_{\var})}{b_2 \gamma_2}\geq C_w,
\end{equation*}
where $C_w:=\frac{b_1 m_{\var} }{b_2 \gamma_2}$.
It follows that
\begin{equation}\label{es1}
  \int_{\R^d} \phi_t^2 h(p_{\infty}) \rmd x \leq \dfrac{1}{C_w}\int_{\R^d} \phi_t (p_t-p_{\infty}) \rmd x.
\end{equation}
Let $\varphi_t$ satisfy $p_t-p_{\infty}=-\rmdiv (h(p_{\infty}) \nabla \varphi_t)$.
From the estimate \eqref{es1} and the H\"older inequality, we obtain
\begin{equation}\label{ineq}
\begin{aligned}
\int_{\R^d} \phi_t^2 h(p_{\infty}) \rmd x \leq&  \dfrac{1}{C_w}\int_{\R^d} \nabla \phi_t \cdot \nabla \varphi_t h(p_{\infty}) \rmd x \\
\leq & \dfrac{1}{C_w} \Big(\int_{\R^d}|\nabla \phi_t|^2 h(p_{\infty}) \rmd x\Big)^{\frac{1}{2}}\Big(\int_{\R^d}|\nabla \varphi_t|^2 h(p_{\infty}) \rmd x\Big)^{\frac{1}{2}}.
\end{aligned}
\end{equation}
Proposition \ref{normprop} yields
\begin{equation}\label{es2}
  D_{\hat{h}(P_{\infty})}(P_t, P_{\infty}) =\Big(\int_{\R^d}|\nabla \varphi_t|^2 h(p_{\infty}) \rmd x\Big)^{\frac{1}{2}}.
\end{equation}
Consequently, we have
\begin{equation*}
\begin{aligned}
 H_g(P_t|P_{\infty}) \leq & C_H \int_{\R^d} \phi_t^2 h(p_{\infty}) \rmd x  \\
\leq & \dfrac{C_H}{C_w} \Big(\int_{\R^d}|\nabla \phi_t|^2 h(p_{\infty}) \rmd x\Big)^{\frac{1}{2}}D_{\hat{h}(P_{\infty})}(P_t, P_{\infty}) \\
\leq & C_{HWI}' \sqrt{I_g(P_t|P_{\infty})}D_{\hat{h}(P_{\infty})}(P_t, P_{\infty})
\end{aligned}
\end{equation*}
by \eqref{Hineq}, \eqref{ineq}, \eqref{es2} and \eqref{Iineq}.
\end{proof}

To obtain the genuine HWI inequality, we need to impose a stronger condition. At this point, it is necessary to rely on the control of $W_2$ over $W_h$.

\begin{thm}[HWI inequality]\label{HWI}
Suppose that Assumptions \ref{reg} and \ref{met} hold, and that $p_{\infty}$ is log-concave. Then
\begin{equation*}
  H_g(P_t|P_{\infty}) \leq C_{HWI} W_h(P_t, P_{\infty}) \sqrt{I_g(P_t|P_{\infty})},~ t\in[0, T),
\end{equation*}
where $C_{HWI}:=\dfrac{\gamma_2^3 b_2^2 M_{\var}^2\sqrt{b_2 M_{\var}}}{2b_1^3 \gamma_1^2 m_{\var}^2\sqrt{b_2 C_M'\rme^{-C_M'\var}}}$.
\end{thm}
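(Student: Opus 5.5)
The natural route is to start from the HWI-like inequality of Theorem \ref{HWI1},
\begin{equation*}
H_g(P_t|P_{\infty}) \leq C_{HWI}' D_{\hat{h}(P_{\infty})}(P_t, P_{\infty}) \sqrt{I_g(P_t|P_{\infty})},
\end{equation*}
and then bound the seminorm $D_{\hat{h}(P_{\infty})}(P_t,P_{\infty})$ by a constant multiple of $W_h(P_t,P_{\infty})$. Since the two-sided bounds \eqref{bound} and $b_1 r\le h(r)\le b_2 r$ give $b_1 m_{\var} p_{\infty}\le h(p_t)\le b_2M_{\var}p_{\infty}$, all weights $h(\cdot)$ along curves trapped between $m_\var p_\infty$ and $M_\var p_\infty$ are comparable to $p_\infty$. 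So the chain is: first compare $D_{\hat h(P_\infty)}$ with the unweighted-by-$h$ norm $\|\cdot\|_{\dot H^{-1}(P_\infty)}$, then compare $\|\cdot\|_{\dot H^{-1}(P_\infty)}$ with $W_2$, and finally compare $W_2$ with $W_h$.

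For the first comparison, I would use the dual definition in Definition \ref{sobolev}: since $h(p_\infty)\ge b_1 p_\infty$, the constraint set $\{\int|\nabla\psi|^2h(p_\infty)\le1\}$ is contained in $\{\int|\nabla\psi|^2 \rmd P_\infty\le 1/b_1\}$, giving $D_{\hat h(P_\infty)}\le b_1^{-1/2}\|P_t-P_\infty\|_{\dot H^{-1}(P_\infty)}$. For the second, the key tool is the classical estimate (Peyre's, see \cite{P}) that if both measures have densities bounded above by $M$ times that of a reference, then the $\dot H^{-1}$ norm is controlled by $W_2$. Concretely, I would take a $W_2$-geodesic $(\rho_s)_{s\in[0,1]}$ from $P_\infty$ to $P_t$ with velocity $v_s$ and write, for a test function $\psi$,
\begin{equation*}
\int\psi\,\rmd(P_t-P_\infty)=\int_0^1\!\!\int\nabla\psi\cdot v_s\,\rho_s\,\rmd x\,\rmd s \le \Big(\sup_s\big\|\tfrac{\rho_s}{p_\infty}\big\|_\infty\Big)^{1/2}\|\nabla\psi\|_{L^2(P_\infty)}W_2(P_t,P_\infty).
\end{equation*}
Log-concavity of $p_\infty$ is exactly what is needed here: by displacement convexity of $\rho\mapsto\int \rho\ln(\rho/p_\infty)$-type $L^\infty$ ratio bounds (or the known fact that $\|\rho_s/p_\infty\|_\infty\le\max(\|\rho_0/p_\infty\|_\infty,\|\rho_1/p_\infty\|_\infty)$ along geodesics with log-concave reference), the interpolants satisfy $\rho_s\le M_\var p_\infty$. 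This produces the factor $\sqrt{M_\var}$ and the $W_2$ term.

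For the last comparison, I would use that $h(r)\le b_2 r$ and the comparability of densities: given an admissible $W_h$ curve $(\mu_s,v_s)$ with $\partial_s\mu_s+\rmdiv(v_sh(\mu_s))=0$, the field $\tilde v_s=v_s h(\mu_s)/\mu_s$ is admissible for the Benamou--Brenier formula for $W_2$, with $\int|\tilde v_s|^2\rmd\mu_s=\int|v_s|^2 \frac{h(\mu_s)}{\mu_s}\,\rmd\hat h(\mu_s)\le b_2\int|v_s|^2\rmd\hat h(\mu_s)$, so $W_2\le\sqrt{b_2}\,W_h$. Combining with $C_{HWI}'$ and the constants $C_H$, $C_I=C_M'\rme^{-C_M'\var}$, $C_w$ should reproduce the stated $C_{HWI}$ (the exact powers of $\gamma_i,b_i,M_\var,m_\var$ coming from bounding $C_H$ crudely and from these comparisons; I would not insist on matching them exactly, only on the structure).

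The main obstacle is the middle step: justifying the $L^\infty$ bound $\rho_s\le M_\var p_\infty$ along the $W_2$ geodesic, which is where log-concavity enters and which requires a careful citation or a McCann-type argument on the Monge--Amp\`ere equation for the interpolating maps. I would first try the direct approach: write $\rho_s=(T_s)_\#P_\infty$ with $T_s=(1-s)\mathrm{id}+s\nabla\varphi$, use concavity of $\det^{1/d}$ and log-concavity of $p_\infty$ to compare $\rho_s(T_s x)$ with $p_\infty(T_sx)$; if that fails, I would fall back on applying the comparison with reference $P_t$ instead, whose density ratio is controlled by \eqref{bound}.
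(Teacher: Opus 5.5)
Your proposal is correct and is essentially the paper's proof. The key steps coincide: log-concavity of $p_{\infty}$, the Monge--Amp\`ere equation and $\det((1-s){\rm I_{d\times d}}+sDT)\geq (\det DT)^s$ give $q_s\leq M_{\var}p_{\infty}$ along the $W_2$ geodesic, hence $D_{P_{\infty}}(P_t,P_{\infty})\leq \sqrt{M_{\var}}\,W_2(P_t,P_{\infty})$, and $W_2\leq\sqrt{b_2}\,W_h$ follows by rescaling the velocity with $b$.

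The only difference is how you reach $D_{P_{\infty}}$: you combine Theorem \ref{HWI1} with $h(p_{\infty})\geq b_1p_{\infty}$, whereas the paper reruns the HWI-like argument with weight $p_{\infty}$. As a result your chain yields the constant $C_{HWI}'\sqrt{b_2M_{\var}/b_1}$, not the displayed $C_{HWI}$, which comes from the paper's separate bookkeeping in that rerun; the two are not comparable in general, so your remark that the constants ``should reproduce'' $C_{HWI}$ is inaccurate, although the structure is right.
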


\begin{proof}
  A proof similar to that of Theorem \ref{HWI1} yields
  \begin{equation}\label{step1}
  \begin{aligned}
    H_g(P_t|P_{\infty})
    \leq & \dfrac{\gamma_2^3 b_2^2 M_{\var}^2}{2b_1^3 \gamma_1^2 m_{\var}^2} D_{P_{\infty}}(P_t|P_{\infty}) \Big(\int_{\R^d}|\nabla \phi_t|^2 p_{\infty} \rmd x\Big)^{\frac{1}{2}} \\
    \leq & C_{HWI}'' D_{P_{\infty}}(P_t|P_{\infty}) \sqrt{I_g(P_t|P_{\infty})}
  \end{aligned}
  \end{equation}
  by replacing $p_{\infty}$ with $h(p_{\infty})$, where $C_{HWI}'':=\frac{\gamma_2^3 b_2^2 M_{\var}^2}{2b_1^3 \gamma_1^2 m_{\var}^2\sqrt{b_2 C_M'\rme^{-C_M'\var}}}$.
Let $Q_s$, $s\in [0,1]$, be the $W_2$-displacement interpolation from $P_{\infty}$ to $P_t$, with density $q_s$.

We claim that
\begin{equation}\label{equa4}
r_s:=\frac{q_s}{p_{\infty}}\leq M_{\var}.
\end{equation}

Let $T$ be $W_2$-optimal map from $P_{\infty}$ to $P_t$.
Denote $T_s(x):=(1-s)x+sT(x)$. Then $Q_s=(T_s)_{\sharp}P_{\infty}$.
Since $p_{\infty}$ is log-concave, we deduce that
\begin{equation}\label{eq1}
  p_{\infty}(T_s(x)) \geq p_{\infty}(x)^{1-s}p_{\infty}(T(x))^s.
\end{equation}
The Monge-Amp\`ere equation gives
\begin{equation}\label{eq5}
  p_t(T(x))\det DT(x)=p_{\infty}(x)
\end{equation}
and
\begin{equation}\label{eq4}
  q_s(T_s(x))=\dfrac{p_{\infty}(x)}{\det\big((1-s){\rm I_{d\times d}}+sDT(x)\big)}.
\end{equation}
In view of \eqref{eq1} and \eqref{eq4}, we get
\begin{equation*}
\begin{aligned}
  \dfrac{q_s(T_s(x))}{p_{\infty}(T_s(x))} \leq & \dfrac{(p_{\infty}(x))^s}{\det\big((1-s){\rm I_{d\times d}}+sDT(x)\big)p_{\infty}(T(x))^s}.
\end{aligned}
\end{equation*}
It follows that
\begin{equation*}
  \begin{aligned}
  \dfrac{q_s(T_s(x))}{p_{\infty}(T_s(x))}\leq & \dfrac{(p_{\infty}(x))^s}{\det(DT(x))^s p_{\infty}(T(x))^s} \\
  =& \Big(\dfrac{p_t(T(x))}{p_{\infty}(T(x))}\Big)^s \\
  \leq & M_{\var}
  \end{aligned}
\end{equation*}
with the help of $\det((1-s){\rm I_{d\times d}}+sDT(x))\geq \det(DT(x))^s$ and \eqref{eq5},
which completes the proof of claim.

We now define the field $v_s$ along the $W_2$-geodesic. That is,
\begin{equation*}
  \partial_s Q_s+\rmdiv(Q_s v_s)=0
\end{equation*}
and
\begin{equation*}
\int_0^1\int_{\R^d}|v_s|^2 \rmd Q_s \rmd s=W_2^2(P_t, P_{\infty}).
\end{equation*}
For the test function $\psi$, we have
\begin{equation*}
\begin{aligned}
  \int_{\R^d} \rmd (P_t-P_{\infty}) = & \int_0^1 \dfrac{\rmd}{\rmd s} \int_{\R^d} \psi \rmd Q_s \rmd s\\
  =& \int_0^1 \int_{\R^d} \nabla \psi \cdot v_s \rmd Q_s \rmd s \\
  \leq & \Big(\int_0^1 \int_{\R^d} |v_s|^2 \rmd Q_s \rmd s\Big)^{\frac{1}{2}}\Big(\int_0^1 \int_{\R^d} |\nabla \psi|^2 \rmd Q_s \rmd s\Big)^{\frac{1}{2}} \\
  = & W_2(P_t, P_{\infty}) \Big(\int_0^1 \int_{\R^d} |\nabla \psi|^2 r_s\rmd P_{\infty} \rmd s\Big)^{\frac{1}{2}} \\
  \leq & W_2(P_t, P_{\infty}) \sqrt{M_{\var}} \Big( \int_{\R^d} |\nabla \psi|^2\rmd P_{\infty}\Big)^{\frac{1}{2}}
\end{aligned}
\end{equation*}
by  the H\"older inequality and \eqref{equa4}.
Taking the supremum over all test functions $\psi$ that satisfy $\int_{\R^d} |\nabla \psi|^2 \rmd P_{\infty}\leq 1$, we obtain
\begin{equation}\label{step2}
  D_{P_{\infty}}(P_t, P_{\infty}) \leq \sqrt{M_{\var}} W_2(P_t, P_{\infty}).
\end{equation}

For any pair $(\tilde{q}_s, \tilde{v}_s)$ satisfying the continuity equation $$\partial_s \tilde{q}_s+\rmdiv (h(\tilde{q}_s) \tilde{v}_s)=0,$$ we have
$$\partial_s \tilde{q}_s+\rmdiv (\tilde{q}_s v_s)=0,$$
where $v_s:=b(\tilde{q}_s) \tilde{v}_s$.
Therefore,
\begin{equation*}
\int_0^1 \int_{\R^d} |v_s|^2 \tilde{q}_s\rmd x\rmd s \leq \int_0^1 \int_{\R^d} |\tilde{v}_s|^2 b(\tilde{q}_s)^2 \tilde{q}_s \rmd x\rmd s
\leq b_2 \int_0^1 \int_{\R^d} |\tilde{v}_s|^2 h(\tilde{q}_s) \rmd x\rmd s.
\end{equation*}
Taking the supremum over all pairs, we get
\begin{equation}\label{step3}
  W_2(P_t, P_{\infty}) \leq \sqrt{b_2} W_h(P_t, P_{\infty}).
\end{equation}

From \eqref{step1}, \eqref{step2} and \eqref{step3}, we can deduce that
\begin{equation*}
\begin{aligned}
  H_g(P_t|P_{\infty}) \leq & C_{HWI}'' D_{P_{\infty}}(P_t|P_{\infty}) \sqrt{I_g(P_t|P_{\infty})} \\
  \leq & C_{HWI}'' \sqrt{b_2 M_{\var}} W_h(P_t, P_{\infty})\sqrt{I_g(P_t|P_{\infty})}.
\end{aligned}
\end{equation*}
\end{proof}

\section{Mobility dependent drift and linear diffusion}\label{lin}
In this section, we consider the case of linear diffusion, that is, $f(r)=\sigma^2 r$. This choice ensures the applicability of the Girsanov theorem.
We now impose an additional conditions on $\Phi$.
Under this condition, using a regularization result as in \cite{MR}, we obtain the convergence rate in the $W_h$ metric.
\begin{ass}[Additional conditions for the potential $\Phi$]\label{Phicon}
The smooth function $\Phi: \R^d \rightarrow [0, +\infty)$ satisfies that there exist constants $C_1, C_2\geq 0$ such that
\begin{equation}\label{Phi1}
-C_1 {\rm I_{d\times d}} \leq D^2 \Phi(x) \leq C_2 {\rm I_{d\times d}}.
\end{equation}
\end{ass}

Fix a time $\tau\in (0,T)$.
The monotonicity of $g$, together with $g(p_{\infty}(x))+\Phi(x)=c_0$, ensures $g(p_{\infty, \max} )+\Phi_{\min}=c_0$.
Combining
$$g(p_{\infty, \max})-g(p_{\infty}(x))=c_0-\Phi_{\min}-\big(c_0-\Phi(x) \big)=\Phi(x)-\Phi_{\min}$$ and
$$g(p_{\infty, \max})-g(p_{\infty}(x)) =\int_{p_{\infty}(x)}^{p_{\infty, \max}} g'(r)\rmd r \leq \dfrac{\gamma_2}{b_1} \ln\frac{p_{\infty, \max}}{p_{\infty}(x)},$$
we conclude that, for all $x\in \R^d$,
$$p_{\infty}(x)\leq p_{\infty, \max} \rme^{-\frac{b_1}{\gamma_2}(\Phi(x)-\Phi_{\min})}.$$ 
The inequality \eqref{Phi1} implies
\begin{equation}\label{Phi4}
|\Delta \Phi|\leq \max\{C_1, C_2\}d=:C_{\Phi}d
\end{equation}
and
\begin{equation*}\label{Phi3}
\begin{aligned}
\Phi(x)=& \Phi(0) +\langle \nabla \Phi(0), x \rangle +\int_0^1(1-t) x \cdot D^2 \Phi(x) \cdot x \rmd t \\
\leq & \Phi(0)+\dfrac{1}{2} |\nabla \Phi(0) |^2 + \dfrac{1}{2} |x|^2+\dfrac{C_2}{2}|x|^2 \\
\leq & C_3|x|^2+C_4,
\end{aligned}
\end{equation*}
where $C_3:=\Phi(0)+\frac{1}{2} |\nabla \Phi(0) |^2>0$ and $C_4:=\frac{1}{2} +\frac{C_2}{2} >0$.
It follows that
\begin{equation*}
\begin{aligned}
s_1:=& \sup_{x\in \R^d} |\nabla \Phi(x)|p_{\infty}(x) \\
\leq& \sup_{|x|\leq R} |\nabla \Phi(x)|p_{\infty}(x) + C p_{\infty, \max} \rme^{ \frac{b_1}{\gamma_2}(\Phi_{\min}+C_4) } \sup_{|x|>R} |x|\rme^{-C_3|x|^2} < +\infty,
\end{aligned}
\end{equation*}
\begin{equation*}
\begin{aligned}
s_2:=& \sup_{x\in \R^d} |\nabla \Phi(x)|^2p_{\infty}(x) \\
\leq& \sup_{|x|\leq R} |\nabla \Phi(x)|^2p_{\infty}(x)+ C^2 p_{\infty, \max} \rme^{ \frac{b_1}{\gamma_2}(\Phi_{\min}+C_4) } \sup_{|x|>R} |x|^2 \rme^{-C_3|x|^2} < +\infty
\end{aligned}
\end{equation*}
by Assumption \ref{reg} (iii).
Recalling that $b$, $h$ are smooth functions and $p\leq M_{\var} p_{\infty} \leq M_{\var}\|p_{\infty}\|_{L^{\infty}}$, we get
$$L_b^{\var}:=\sup_{|r|\leq M_{\var}\|p_{\infty}\|_{L^{\infty}} }|b'(r)|<+\infty, ~ L_{h, 2}^{\var}:= \sup_{|r|\leq M_{\var}\|p_{\infty}\|_{L^{\infty}} } |h''(r)|<+\infty. $$

\begin{lem}\label{lemW}
Suppose that Assumptions \ref{reg}, \ref{met} and \ref{Phicon} hold. Then, for any $P_0\in \Lambda_{\var}^g$, we have
\begin{equation}\label{L2W2}
  \int_0^{\tau}\|p_t-p_{\infty}\|_{L^2}^2 \rmd t \leq C_{LW}W_2^2(P_0, P_{\infty}),
\end{equation}
where $C_{\lambda}:= -\dfrac{b(0) C_{\Phi}d}{2} +\dfrac{(L_{h,2}^{\var} M_{\var} s_1)^2}{2\sigma^2} $,  $C_{H,\var}=\dfrac{2(\|p_0\|_{L^{\infty}}^{\frac{1}{2}} -\|p_{\infty}\|_{L^{\infty}}^{\frac{1}{2}} )}{\ln( \|p_0\|_{L^{\infty}}/\|p_{\infty}\|_{L^{\infty}}) )}$, $C_{LW}:=\dfrac{\rme^{2C_{\lambda}\tau}}{\sigma^2}C_{H,\var}^2$.
\end{lem}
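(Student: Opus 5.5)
We prove \eqref{L2W2} by a coupling argument on the McKean--Vlasov SDE \eqref{MVSDE} with $f(r)=\sigma^2 r$, whose diffusion coefficient is then the constant $\sqrt{2}\sigma$. Let $(X_t)$ start from $\varsigma_0\sim P_0$, and let $(Y_t)$ start from $Y_0\sim P_\infty$, with $(X_0,Y_0)$ a $W_2$-optimal coupling. Since $P_\infty$ is stationary, $Y$ solves the linear SDE $\rmd Y_t=-\nabla\Phi(Y_t)b(p_\infty(Y_t))\rmd t+\sqrt2\sigma\rmd W_t$ and has law $P_\infty$ for all $t$.

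\textbf{Step 1 (reduce $L^2$ to relative entropy).} By Lemma \ref{Ag}, both $p_t$ and $p_\infty$ are bounded by $\max\{\|p_0\|_{L^\infty},\|p_\infty\|_{L^\infty}\}$. We then bound $\|p_t-p_\infty\|_{L^2}^2$ by a multiple of $H(P_t|P_\infty)$. The intended route is $|\sqrt{p_t}-\sqrt{p_\infty}|\le \tfrac12 C_{H,\var}|\ln p_t-\ln p_\infty|$ on the range of values: $C_{H,\var}$ is exactly the logarithmic-mean-type constant $2(\sqrt a-\sqrt b)/\ln(a/b)$. Combining this with the standard comparison $(\sqrt{p}-\sqrt{q})^2\le$ relative-entropy density, or more directly a Pinsker/Hellinger-type estimate weighted by the sup bound, gives
\[
\|p_t-p_\infty\|_{L^2}^2\lesssim C_{H,\var}^2 H(P_t|P_\infty).
\]
This constant bookkeeping is where I am least sure the precise form of $C_{H,\var}$ falls out, but the mechanism is the mean value theorem for $\sqrt{\cdot}$ versus $\ln$.

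\textbf{Step 2 (Girsanov / log-Harnack for $H(P_t|P_\infty)$).} Write the drift difference as
\[
\nabla\Phi\,[b(p_t(x))-b(p_\infty(x))],
\]
since both processes are driven by the same Brownian motion and the same potential. Following the regularization argument in \cite{MR}, we compare the law of $X_t$ with the law of the linear process started at $X_0$ via Girsanov. The relative entropy of the path laws is $\frac{1}{4\sigma^2}\E\int_0^t|\text{drift difference}|^2\rmd s$. The dependence on the initial point is handled by a log-Harnack inequality for the linear semigroup $P^\infty_t$ with drift $-\nabla\Phi\, b(p_\infty)$:
\[
P^\infty_t\ln F(x)\le \ln P^\infty_t F(y)+\frac{\rme^{2C_\lambda t}}{\sigma^2 }|x-y|^2\cdot(\text{const}),
\]
obtained by coupling by change of measure. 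Here the one-sided Lipschitz constant $C_\lambda$ of the drift comes from $D^2\Phi$ bounds via \eqref{Phi4} and from the derivative of $h(p_\infty)\nabla\Phi/p_\infty$, which is controlled by $L^\var_{h,2}M_\var s_1$. Young's inequality produces the term $(L^\var_{h,2}M_\var s_1)^2/(2\sigma^2)$ in $C_\lambda$. Taking expectation over the optimal coupling gives $H(P_t|P_\infty)\le \frac{\rme^{2C_\lambda t}}{\sigma^2}W_2^2(P_0,P_\infty)$ times a factor $\lesssim 1/t$ or integrable in $t$.

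\textbf{Step 3.} Integrating over $[0,\tau]$ and combining with Step 1 gives \eqref{L2W2} with $C_{LW}=\rme^{2C_\lambda\tau}C_{H,\var}^2/\sigma^2$.

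The main obstacle is Step 2. Pure log-Harnack gives a $1/t$ singularity, which is not integrable at $0$. To avoid it, I would instead estimate $\frac{\rmd}{\rmd t}$ of a weighted $L^2$ (or $\dot H^{-1}$) distance directly, i.e.\ use a Girsanov bound on path entropy whose density is integrated in time. Concretely, the path-space relative entropy $\frac1{4\sigma^2}\E\int_0^\tau|\cdots|^2$ already controls $\int_0^\tau$ quantities, and the initial discrepancy enters through a synchronous-coupling Gronwall estimate $\E|X_t-Y_t|^2\le \rme^{2C_\lambda t}W_2^2$.
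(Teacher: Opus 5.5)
There is a genuine gap in Step 2, and the fallback at the end does not contain the idea that makes the lemma work.

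\textbf{Why the Girsanov route fails.} The Girsanov term you want is $\frac{1}{4\sigma^2}\int_0^t\!\int_{\R^d}|\nabla\Phi|^2|b(p_s)-b(p_\infty)|^2p_s\,\rmd x\,\rmd s$. The only available bound on it is a constant times $\int_0^t\|p_s-p_\infty\|_{L^2}^2\,\rmd s$, which is the left-hand side of \eqref{L2W2} itself. This is exactly how the paper \emph{uses} Lemma \ref{lemW} in the proof of Theorem \ref{WHcon}, so the logic runs the other way.

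Closing it by a bootstrap with Step 1 gives $u(t)\lesssim t^{-1}W_2^2(P_0,P_\infty)+\int_0^tu(s)\,\rmd s$ for $u(t)=\|p_t-p_\infty\|_{L^2}^2$. As you noticed, the $t^{-1}$ is not integrable. This is structural, not technical. Even for the heat flow, the best pointwise bound is $\|p_t-p_\infty\|_{L^2}^2\lesssim t^{-1}\|p_0-p_\infty\|_{\dot H^{-1}}^2$. So any argument that first bounds $\|p_t-p_\infty\|_{L^2}$ pointwise in $t$ and then integrates hits the same wall. You need an identity in which $\int_0^\tau\|p_t-p_\infty\|_{L^2}^2\,\rmd t$ appears directly as dissipation.

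Your synchronous-coupling bound $\E|X_t-Y_t|^2\le\rme^{2C_\lambda t}W_2^2$ is also unjustified. The drift $x\mapsto b(p_t(x))\nabla\Phi(x)$ depends on the density pointwise, not $W_2$-Lipschitzly on the law. Even if the bound held, it would control $W_2(P_t,P_\infty)$, not an $L^2$ norm of densities. A minor point: Lemma \ref{Ag} gives $p_t\le M_\var p_\infty$, not $p_t\le\max\{\|p_0\|_{L^\infty},\|p_\infty\|_{L^\infty}\}$.

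\textbf{What the proof actually needs.} The missing ingredients are an $\dot H^{-1}$ energy estimate and a $\dot H^{-1}$--$W_2$ comparison; no stochastic argument is involved.

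Set $r_t=p_t-p_\infty$, so that $\partial_tr_t=\sigma^2\Delta r_t+\beta\,\rmdiv(\nabla\Phi\,r_t)+\rmdiv(\nabla\Phi(J_t-\beta)r_t)$, with $\beta=h'(0)=b(0)$ and $J_t=\int_0^1h'(p_\infty+\theta r_t)\,\rmd\theta$. Let $-\Delta\psi_t=r_t$ and $\lambda_t=\|\nabla\psi_t\|_{L^2}^2$. In $\frac12\lambda_t'$ the three terms behave as follows:
\begin{itemize}
\item The diffusion produces exactly $-\sigma^2\|r_t\|_{L^2}^2$.
\item The linear drift term is bounded by a multiple of $\lambda_t$, using the bounds on $D^2\Phi$ from Assumption \ref{Phicon}.
\item The remainder is at most $K_\var\|r_t\|_{L^2}\|\nabla\psi_t\|_{L^2}$ with $K_\var=L_{h,2}^\var M_\var s_1$, because $|\nabla\Phi||J_t-\beta|\le L^\var_{h,2}M_\var p_\infty|\nabla\Phi|$.
\end{itemize}
Young's inequality on the last term is where $(L_{h,2}^\var M_\var s_1)^2/(2\sigma^2)$ in $C_\lambda$ comes from, not a one-sided Lipschitz constant for a log-Harnack inequality.

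Gronwall then gives $\int_0^\tau\|r_t\|_{L^2}^2\,\rmd t\le\sigma^{-2}\rme^{2C_\lambda\tau}\|p_0-p_\infty\|_{\dot H^{-1}}^2$. Peyre's comparison $\|p_0-p_\infty\|_{\dot H^{-1}}\le C_{H,\var}W_2(P_0,P_\infty)$ finishes the proof.

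This is the real origin of $C_{H,\var}$. It equals $\int_0^1\|p_0\|_{L^\infty}^{(1-s)/2}\|p_\infty\|_{L^\infty}^{s/2}\,\rmd s$, which comes from the $L^\infty$ bound on the density along the $W_2$ displacement interpolation. It does not come from comparing $L^2$ with relative entropy.
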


\begin{proof}
Denote $r_t:=p_t-p_{\infty}$. Then
\begin{equation}\label{NFPE2}
\partial_t r_t=\sigma^2 \Delta r_t +\rmdiv \big(\nabla \Phi (h(p_t)-h(p_{\infty})) \big)
\end{equation}
and
$$h(p_t)-h(p_{\infty})=(p_t-p_{\infty}) \int_0^1 h'(p_{\infty}+\theta(p_t-p_{\infty}))\rmd \theta.$$
Define
$$J _t:=\int_0^1 h'(p_{\infty}+\theta(p_t-p_{\infty}))\rmd \theta.$$
We then obtain
$$h(p_t)-h(p_{\infty})=J_t r_t,$$
and the equation \eqref{NFPE2} can be rewrite as
\begin{equation}\label{NFPE3}
  \partial_t r_t=\sigma^2 \Delta r_t+ \beta \nabla\cdot (\nabla \Phi r_t)+\nabla\cdot (\nabla \Phi(J_t-\beta)r_t),
\end{equation}
where $\beta:=h'(0)=b(0)>0$.
For the second term on the right hand side, estimate $0\leq p_{\infty}+\theta(p_t-p_{\infty}) \leq M_{\var}p_{\infty}$ implies
\begin{equation}\label{equa5}
\begin{aligned}
  |\nabla \Phi(x)|\cdot |J_t-\beta|\leq & |\nabla \Phi(x)|\cdot \Big|\int_0^1 h'\big(p_{\infty}(x)+\theta(p_t(x)-p_{\infty}(x))\big)-h'(0)\rmd \theta\Big| \\
  \leq & |\nabla \Phi(x)| \cdot \int_0^1 \int_0^{M_{\var} p_{\infty}(x)} |h''(s)|\rmd s \rmd \theta \\
  \leq & |\nabla \Phi(x)|\cdot L_{h,2}^{\var} M_{\var} p_{\infty}(x) \\
  \leq &  L_{h,2}^{\var} M_{\var} s_1=: K_{\var} <+\infty.
\end{aligned}
\end{equation}

  Let $\psi_t$ satisfy $-\Delta \psi_t=r_t$ and set $\lambda_t:=\|r_t\|_{\dot{H}^{-1}}^2=\int_{\R^d} |\nabla \psi_t|^2 \rmd x =\int_{\R^d} \psi_t r_t \rmd x \geq 0 $.
  Then
  \begin{equation*}
  \begin{aligned}
  \dfrac{1}{2}\dfrac{\rmd}{\rmd t}\lambda_t=&\dfrac{1}{2}\int_{\R^d} \partial_t\psi_t\cdot r_t+ \psi_t\cdot \partial_t r_t \rmd x  \\
  =&\dfrac{1}{2}\int_{\R^d} -\partial_t\psi_t\cdot \Delta \psi_t- \psi_t\cdot \rmdiv(\nabla\partial_t \psi_t) \rmd x \\
  =&-\int_{\R^d} \psi_t\cdot \Delta(\partial_t \psi_t) \rmd x \\
  =&\sigma^2 \int_{\R^d} \psi_t \Delta r_t \rmd x +\beta \int_{\R^d} \psi_t \nabla\cdot(\nabla \Phi r_t) \rmd x +\int_{\R^d} \psi_t\nabla\cdot(\nabla \Phi (J_t -\beta)r_t)\rmd x .
  \end{aligned}
  \end{equation*}
  Denote
  \begin{equation*}
  \begin{aligned}
  {\rm (I)}:= & \sigma^2 \int_{\R^d} \psi_t \Delta r_t \rmd x, \\
  {\rm (II)}:= & \beta \int_{\R^d} \psi_t \nabla\cdot(\nabla \Phi r_t) \rmd x, \\
  {\rm (III)}:=& \int_{\R^d} \psi_t\nabla\cdot(\nabla \Phi (J_t-\beta)r_t)\rmd x.
  \end{aligned}
  \end{equation*}
  A direct calculation yields
  \begin{equation}\label{I}
  {\rm (I)}=-\sigma^2 \int_{\R^d} r_t^2 \rmd x
  \end{equation}
  and
  \begin{equation*}
  \begin{aligned}
  {\rm (II)}
  =& \beta\int_{\R^d} \nabla \psi_t\cdot \nabla \Phi \Delta \psi_t \rmd x \\
  =& -\beta \int_{\R^d} \nabla(\nabla \psi_t \cdot \nabla \Phi) \cdot \nabla \psi_t \rmd x \\
  =& -\beta \Big( \int_{\R^d} \nabla \psi_t \cdot D^2\psi_t \cdot \nabla \Phi \rmd x+ \int_{\R^d} \nabla \psi_t \cdot D^2 \Phi \cdot \nabla \psi_t \rmd x  \Big ) \\
  =& -\beta \Big( \frac{1}{2}\int_{\R^d} \nabla(|\nabla \psi_t|^2) \cdot \nabla \Phi \rmd x+\int_{\R^d} \nabla \psi_t \cdot D^2 \Phi \cdot \nabla \psi_t \rmd x  \Big ) \\
  \leq & -\beta \Big( -\frac{1}{2}\int_{\R^d} |\nabla \psi_t|^2 |\Delta \Phi| \rmd x +\int_{\R^d} |\nabla \psi_t|^2 |\Delta \Phi | \rmd x  \Big ) \\
  \leq & -\frac{\beta C_{\Phi}d}{2}  \lambda_t.
  \end{aligned}
  \end{equation*}
  Combining the estimate \eqref{equa5}, the H\"older inequality, and the Young inequality with {\rm (III)}, we deduce that
  \begin{equation*}
  \begin{aligned}
  {\rm (III)}=& -\int_{\R^d} \nabla \psi_t\cdot \nabla \Phi (J_t-\beta)r_t\rmd x \\
  \leq & K_{\var} \|r_t\|_{L^2} \|\nabla \psi_t\|_{L^2} \\
  \leq & \dfrac{\sigma^2}{2}\|r_t\|_{L^2}^2 +\dfrac{K_{\var}^2}{2\sigma^2} \|\nabla \psi_t \|_{L^2}^2 \\
  =& \dfrac{\sigma^2}{2}\|r_t\|_{L^2}^2 +\dfrac{K_{\var}^2}{2\sigma^2} \lambda_t.
  \end{aligned}
  \end{equation*}
  Therefore,
  \begin{equation*}
  \dfrac{1}{2}\dfrac{\rmd}{\rmd t}\lambda_t ={\rm (I)}+{\rm (II)}+{\rm (III)}\leq -\dfrac{\sigma^2}{2}\|r_t\|_{L^2}^2 +C_{\lambda} \lambda_t.
  \end{equation*}
  It follows that
  \begin{equation*}
  \dfrac{\rmd}{ \rmd t}\Big(\rme^{-2C_{\lambda}t}\lambda_t\Big) +\sigma^2 \rme^{-2C_{\lambda}t}\|r_t\|_{L^2}^2\leq 0.
  \end{equation*}
  Integrating over $[0, \tau]$ and using $\lambda_{\tau}\geq 0$, we deduce that
  \begin{equation*}
  -\lambda_0 + \sigma^2 \int_0^{\tau}\rme^{-2C_{\lambda}t}\|r_t\|_{L^2}^2 \rmd t
  \leq \rme^{-2C_{\lambda}\tau}\lambda_{\tau} -\lambda_0 + \sigma^2 \int_0^{\tau}\rme^{-2C_{\lambda}t}\|r_t\|_{L^2}^2 \rmd t \leq 0.
  \end{equation*}
  Consequently,
  \begin{equation*}
  \int_0^{\tau}\|r_t\|_{L^2}^2 \rmd r  \leq \dfrac{\rme^{2C_{\lambda}\tau}}{\sigma^2}\lambda_0,
  \end{equation*}
  which means that
  \begin{equation*}
  \int_0^{\tau}\|p_t-p_{\infty}\|_{L^2}^2 \rmd r  \leq \dfrac{\rme^{2C_{\lambda}\tau}}{\sigma^2}\|p_0-p_{\infty}\|_{\dot{H}^{-1}}^2.
  \end{equation*}

  According to Theorem 5 in \cite{P}, we get
  \begin{equation*}
  \|p_0-p_{\infty}\|_{\dot{H}^{-1}}\leq C_{H,\var} W_2(P_0, P_{\infty}).
  \end{equation*}
  It follows that
  \begin{equation*}
  \int_0^{\tau}\|p_0-p_{\infty}\|_{L^2}^2 \rmd r  \leq C_{LW}W_2^2(P_0, P_{\infty}).
  \end{equation*}
\end{proof}

\begin{thm}[Convergence rate of the modified Wasserstein metric]\label{WHcon}
Suppose that Assumptions \ref{reg}, \ref{met} and \ref{Phicon} hold. Then, for any $P_0\in \Lambda_{\var}^g$, we have
\begin{equation}\label{HWh}
  H_g(P_t|P_{\infty}) \leq \rme^{-\frac{t-\tau}{C_{HI}}}C_{HW} W_h^2(P_0, P_{\infty}),~ T > t\geq \tau.
\end{equation}
Moreover, the convergence rate in the $W_h$ metric is given by
\begin{equation}\label{Whconver}
 W_h^2(P_t|P_{\infty}) \leq \rme^{-\frac{t-\tau}{C_{HI}}} C_T C_{HW} W_h^2(P_0, P_{\infty}),~ T >t\geq \tau,
\end{equation}
where $L_B:=b_2 C_{\Phi}d +\dfrac{L_b^{\var}b_2 s_2}{\sigma^2}$, $C_B:=(L_b^{\var})^2 M_{\var} s_2 C_{LW}$, $C_{HW}:=\dfrac{\gamma_2}{2\sigma^2\int_0^{\tau} \rme^{-2L_B t} \rmd t} +\dfrac{\gamma_2}{2\sigma^2} C_B$.
\end{thm}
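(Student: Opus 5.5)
The plan is to prove a log-Harnack type regularization estimate at the fixed time $\tau$,
\begin{equation*}
H_g(P_\tau|P_\infty)\le C_{HW}\,W_h^2(P_0,P_\infty),
\end{equation*}
and then combine it with the entropy decay \eqref{Hconver} started at time $\tau$ and with the Talagrand inequality \eqref{Tineq}. Indeed, once the displayed bound holds, applying \eqref{Hconver} to the flow restarted at $\tau$ (legitimate since $P_\tau\in\Lambda_\var^g$ by Lemma \ref{Ag}) gives \eqref{HWh}. Then \eqref{Tineq} gives \eqref{Whconver}. So everything reduces to the estimate at time $\tau$.

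For this estimate, I use the linear-diffusion structure $f(r)=\sigma^2 r$. The first step is to pass from $H_g$ to the classical relative entropy. Since $g'(r)=\sigma^2/(r b(r))\le \sigma^2/(b_1 r)$, a comparison of $\eta$ with $r\ln r$ (or simply $H_g\le \frac{\gamma_2}{\cdot}H$ via the bounds on $a(r)=rg'(r)$) gives $H_g(P_\tau|P_\infty)\le \frac{\gamma_2}{b_1}$-type multiples of $H(P_\tau|P_\infty)$. The factor $\gamma_2/(2\sigma^2)$ in $C_{HW}$ suggests I should match constants accordingly.

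Next, I freeze the measure flow: $X_t$ solves a linear SDE with drift $B_t(x)=-\nabla\Phi(x)\,b(p_t(x))$ and constant diffusion $\sqrt2\sigma$. Let $Y_t$ be the solution started from $P_\infty$ with the same frozen drift $B_t$, coupled with $X$ along a $W_2$-optimal coupling of the initial laws. I add a Wang-type coupling correction $\xi_t=\frac{(X_0-Y_0)\,\rme^{-L_B t}}{\int_0^\tau \rme^{-2L_B s}\rmd s}$ that forces $X_\tau=Y_\tau$. Girsanov's theorem together with the entropy data-processing inequality then bounds $H(\mathrm{Law}(Y_\tau)\,|\,P_\tau)$ by $\frac{1}{4\sigma^2}\E\int_0^\tau|\text{compensating drift}|^2\rmd t$. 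Here $L_B$ is a one-sided Lipschitz constant of $B_t$: $\nabla B_t$ involves $D^2\Phi\, b$, bounded by $b_2C_\Phi d$, and $\nabla\Phi\otimes\nabla p_t\, b'$, controlled via $L_b^\var$, $s_2$, and the $\sigma^{-2}$ appearing in $\nabla p_t$. This yields the first term $\frac{\gamma_2}{2\sigma^2\int_0^\tau \rme^{-2L_Bt}}W_2^2$.

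Since $\mathrm{Law}(Y_t)$ is not $P_\infty$ (its drift uses $p_t$, not $p_\infty$), a second Girsanov comparison is needed between the frozen-drift flow from $P_\infty$ and the stationary flow with drift $-\nabla\Phi\, b(p_\infty)$. Its cost is $\frac{1}{4\sigma^2}\int_0^\tau\!\int|\nabla\Phi|^2|b(p_t)-b(p_\infty)|^2\rmd P_\infty\rmd t\le \frac{(L_b^\var)^2 s_2 M_\var}{4\sigma^2}\int_0^\tau\|p_t-p_\infty\|_{L^2}^2\rmd t$, which Lemma \ref{lemW} bounds by $C_B W_2^2(P_0,P_\infty)/\sigma^2$ up to constants. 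I combine the two entropy bounds using the chain rule / convexity of $H$ along the Girsanov density product, rather than a triangle inequality, which fails for entropy. Finally, \eqref{step3} gives $W_2\le\sqrt{b_2}W_h$, converting to $W_h^2$.

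The main obstacle is the one-sided Lipschitz bound on $B_t$. It requires control of $\nabla p_t$ weighted by $|\nabla\Phi|^2$, which is exactly the difficulty flagged in the introduction. I would try to obtain it from $p_t\le M_\var p_\infty$ together with the $s_2$ bound and the maximum-principle structure of Lemma \ref{Ag}. Failing that, I would use a coupling by reflection argument that avoids needing $\nabla B_t$ pointwise.
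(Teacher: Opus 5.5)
Your reduction is the same as the paper's: a time-$\tau$ estimate $H_g(P_\tau|P_\infty)\lesssim W_h^2(P_0,P_\infty)$, then \eqref{Hconver} restarted at $\tau$ and \eqref{Tineq}, together with $H_g\le\frac{\gamma_2}{b_1}H$ and $W_2\le\sqrt{b_2}W_h$. The coupling step as you set it up does not go through, however.

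\textbf{The one-sided Lipschitz bound is not available.} You run both processes with the frozen drift $B_t=-b(p_t)\nabla\Phi$, so you need a one-sided Lipschitz bound for $B_t$. Set $\phi_t=g(p_t)-g(p_\infty)$. Then $\nabla p_t=\frac{h(p_t)}{\sigma^2}(\nabla\phi_t-\nabla\Phi)$, and hence
\[
DB_t=-b(p_t)D^2\Phi+\frac{b'(p_t)h(p_t)}{\sigma^2}\nabla\Phi\otimes\nabla\Phi-\frac{b'(p_t)h(p_t)}{\sigma^2}\nabla\Phi\otimes\nabla\phi_t .
\]
The first two terms are controlled by $C_\Phi$, $L_b^\var$, $M_\var$ and $s_2$. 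The last term needs a pointwise bound on $\nabla\phi_t$, and nothing in the paper provides one. Lemma \ref{Ag} is purely zeroth order ($\|\phi_t\|_{L^\infty}\le\var$, $m_\var p_\infty\le p_t\le M_\var p_\infty$), and the only gradient information is integrated (the Fisher information $I_g$). The identity $\nabla p=-h(p)\nabla\Phi/\sigma^2$, which produces the $\sigma^{-2}$ and $s_2$ in $L_B$, holds for $p_\infty$ only. So the $L_B$ in the statement is the one-sided Lipschitz constant of $B^\infty=-b(p_\infty)\nabla\Phi$ (this is \eqref{LB}), not of $B_t$. Your fallback does not help. Reflection coupling gives no deterministic meeting $X_\tau=Y_\tau$, hence no Girsanov density of the kind a log-Harnack bound needs. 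It also still requires one-sided control of $B_t$.

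\textbf{The missing idea.} Couple with the \emph{stationary} drift, and charge the mismatch $B^\infty-B^P_t$ to the same Girsanov density.
\begin{itemize}
\item Let $X$ solve $\rmd X_t=B^\infty(X_t)\rmd t+\sqrt2\sigma\rmd W_t$ with $X_0\sim P_\infty$, so $\mathrm{Law}(X_t)=P_\infty$ for all $t$.
\item Let $Y$ solve $\rmd Y_t=B^\infty(Y_t)\rmd t+\vartheta_te_t\rmd t+\sqrt2\sigma\rmd W_t$ with $Y_0\sim P_0$, optimally coupled to $X_0$.
\item Then $\frac{\rmd}{\rmd t}|X_t-Y_t|\le L_B|X_t-Y_t|-\vartheta_t$ uses only \eqref{LB}, and this forces $X_\tau=Y_\tau$.
\item Write the drift of $Y$ as $B^P_t(Y_t)+\alpha_t$ with $\alpha_t=\vartheta_te_t+B^\infty(Y_t)-B^P_t(Y_t)$. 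A single Girsanov change removes $\alpha_t$. Under $\Q=R_\tau\P$, $Y$ is the frozen-drift diffusion started from $P_0$, so $\mathrm{Law}_{\Q}(Y_t)=P_t$.
\item The log-Harnack bound $\int\log f\,\rmd P_\tau\le\log\int f\,\rmd P_\infty+\E_{\P}[R_\tau\log R_\tau]$ with $f=\rmd P_\tau/\rmd P_\infty$ gives $H(P_\tau|P_\infty)\le\frac{1}{4\sigma^2}\E_{\Q}\int_0^\tau|\alpha_t|^2\rmd t$.
\item The mismatch term is integrated against $p_t$, so $p_t\le M_\var p_\infty$, $s_2$ and Lemma \ref{lemW} give \eqref{equa6}.
\end{itemize}

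\textbf{This also fixes the direction of your entropy bound.} In your arrangement the correction sits on the process started at $P_\infty$, and the uncorrected process has law $P_t$. Your Girsanov argument then bounds $H(\mathrm{Law}_{\Q}(Y_\tau)|P_\tau)$, with $P_\tau$ in the \emph{second} slot, which is the reverse of what is needed. Relative entropy has no chaining rule that would combine this with a separate comparison between the frozen flow from $P_\infty$ and the stationary flow into a bound on $H(P_\tau|P_\infty)$. The composition has to happen inside one path-space density, which is exactly what $\alpha_t$ does. No second comparison is then needed, and no control of the frozen flow from $P_\infty$.
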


\begin{proof}
The idea of the proof is inspired by Section 4 of \cite{W}. We proceed by invoking the Girsanov theorem.

First, we define
$$B^{\infty}(x):=-b(p_{\infty}(x))\nabla \Phi(x), ~B_t^{P}( x):=-b(p_t( x))\nabla \Phi(x)$$
and
$$\bar{B}_t(x):=B^{\infty}(x)-B_t^{P}(x).$$
The estimate $|b(p_t)-b(p_{\infty})|\leq L_b^{\var}|p_t-p_{\infty}|$ implies
\begin{equation*}
|\bar{B}_t(x)|^2\leq (L_b^{\var})^2|\nabla \Phi(x)|^2 |p_t-p_{\infty}|^2.
\end{equation*}
The bound $p_t\leq M_{\var} p_{\infty}$ induces
\begin{equation*}
\int_{\R^d} |\bar{B}_t(x)|^2 p_t\rmd x \leq (L_b^{\var})^2 M_{\var} \int_{\R^d} |\nabla \Phi(x)|^2 |p_t-p_{\infty}|^2 p_{\infty}\rmd x \leq (L_b^{\var})^2 M_{\var} s_2 \|p_t-p_{\infty}\|_{L^2}^2.
\end{equation*}
Using Lemma \ref{lemW}, we derive
\begin{equation}\label{equa6}
\int_0^{\tau} \int_{\R^d} |\bar{B}_t( x)|^2 p_t\rmd x \rmd t \leq C_B W_2^2(P_0, P_{\infty}).
\end{equation}
A direct calculation shows that
\begin{equation*}
\begin{aligned}
DB^{\infty}(x)= &-b(p_{\infty}(x)) D^2 \Phi(x)-b'(p_{\infty}(x))\nabla p_{\infty}(x)\otimes \nabla \Phi(x) \\
= &-b(p_{\infty}(x)) D^2 \Phi(x) + \dfrac{b'(p_{\infty}(x)) h(p_{\infty}(x))}{\sigma^2}\nabla \Phi(x) \otimes \nabla \Phi(x)
\end{aligned}
\end{equation*}
by $0=\nabla g(p_{\infty}) +\nabla \Phi =\frac{\sigma^2}{h(p_{\infty})}\nabla p_{\infty}+\nabla \Phi$.
Then the operator norm estimate
\begin{equation}\label{LB}
\| DB^{\infty}(x) \|_{\rm op}:= \sup_{|v|=1} |DB^{\infty}(x)\cdot v| \leq b_2 C_{\Phi} d +\dfrac{L_b^{\var}b_2}{\sigma^2}|\nabla \Phi(x)|^2 p_{\infty}(x) \leq L_B
\end{equation}
yields
\begin{equation*}
\langle B^{\infty}(x)-B^{\infty}(y), x-y \rangle \leq L_B |x-y|^2.
\end{equation*}

Let $X_0$ and $Y_0$ be two random variables such that
$${\rm Law}(X_0)=P_{\infty},~ {\rm Law}(Y_0)=P_0$$
and
$$\E_{\P}|X_0-Y_0|^2=W_2^2(P_0, P_{\infty}).$$
In addition, let $X_t$ be the solution of
\begin{equation}\label{Xt}
\rmd X_t=B^{\infty}(X_t) \rmd t +\sqrt{2}\sigma \rmd W_t
\end{equation}
with initial condition $X_0$.
Since $p_{\infty}$ is the density of the invariant measure, we have ${\rm Law}(X_t)=P_{\infty}$ for all $t\geq 0$.
Next, we construct the following SDE
\begin{equation}\label{Yt}
\rmd Y_t=B^{P}_t(Y_t) \rmd t +\sqrt{2}\sigma \rmd W_t+\alpha_t \rmd t
\end{equation}
with initial condition $Y_0$, where
\begin{equation*}
\alpha_t:=\vartheta_t e_t+B^{\infty}(Y_t)-B^{P}_t( Y_t)
\end{equation*}
and
\begin{equation*}
\vartheta_t:=\dfrac{|X_0-Y_0|\rme^{-L_Bt}}{\int_0^{\tau}\rme^{-2L_B t}\rmd t}, ~e_t:=
\begin{cases}
 \dfrac{X_t-Y_t}{|X_t-Y_t|}, \quad & X_t-Y_t\ne 0, \\
 0, &  X_t-Y_t=0.
\end{cases}
\end{equation*}
For convenience, denote $Z_t:=X_t-Y_t$.
Then
\begin{equation*}
\rmd Z_t=\big( B^{\infty}(X_t)-B^{\infty}(Y_t) -\vartheta_t e_t \big) \rmd t.
\end{equation*}
Estimate \eqref{LB} ensures
\begin{equation*}
\dfrac{\rmd |Z_t|}{\rmd t} \leq L_B |Z_t|-\vartheta_t.
\end{equation*}
It follows that
\begin{equation*}
\rme^{-L_B\tau}|Z_{\tau}|-|Z_0| \leq -\int_0^{\tau} \rme^{-L_Bt} \vartheta_t \rmd t =-|Z_0|,
\end{equation*}
which means $X_{\tau}=Y_{\tau}$.

Denote
$$\zeta_t:=\dfrac{\alpha_t}{\sqrt{2}\sigma}, ~R_{\tau}:=\rme^{  -\int_0^{\tau} \langle \zeta_t, \rmd W_t \rangle -\frac{1}{2} \int_0^{\tau}|\zeta_t|^2 \rmd t }.$$
The Girsanov theorem implies that, under $\Q:=R_{\tau}\P$,
\begin{equation*}
\widetilde{W}_t:=W_t+\int_0^t \zeta_s \rmd s
\end{equation*}
is a Brownian motion.
Under this probability measure,
\begin{equation*}
  \rmd Y_t=B^{P}_t(Y_t) \rmd t +\sqrt{2}\sigma\rmd \widetilde{W}_t
\end{equation*}
admits a unique solution $Y_t$ and ${\rm Law}_{\Q}(Y_t)=P_t$.
Since $X_{\tau}=Y_{\tau}$, we obtain
\begin{equation*}
(P_{\tau}\log f)(p_0)=\E_{\Q}[\log f(Y_{\tau})]=\E_{\P}[R_{\tau} \log f(Y_{\tau})]=\E_{\P}[R_{\tau} \log f(X_{\tau})]
\end{equation*}
for $f\in \mathcal{B}_b^+$. 
The Young inequality gives
\begin{equation*}
\begin{aligned}
\E_{\P}[R_{\tau} \log f(X_{\tau})] \leq & \log \E_{\P}[f(X_{\tau})] +\E_{\P}[R_{\tau} \log R_{\tau}] \\
= &\log (P_{\tau} f)(p_{\infty})+\E_{\P}[R_{\tau} \log R_{\tau}].
\end{aligned}
\end{equation*}
Thus,
\begin{equation}\label{LH}
(P_{\tau}\log f)(p_0) \leq \log (P_{\tau} f)(p_{\infty})+\E_{\P}[R_{\tau} \log R_{\tau}],
\end{equation}
which is the log-Harnack inequality.
By the definition of $R_{\tau}$ and $|\alpha_t|^2 \leq 2|\vartheta_t|^2 +2|\bar{B}_t(Y_t)|^2$,
we have
\begin{equation*}
\begin{aligned}
\E_{\P}[R_{\tau} \log R_{\tau}]
=& \E_{\P}\big[R_{\tau}\big(-\int_0^{\tau}\langle \zeta_t , \rmd \widetilde{W}_t \rangle+\dfrac{1}{2} \int_0^{\tau} |\zeta_t|^2 \rmd t \big) \big] \\
=&\dfrac{1}{2}\E_{\Q} \int_0^{\tau}|\zeta_t|^2 \rmd t \\
\leq & \dfrac{1}{2\sigma^2}\E_{\Q}\Big[\int_0^{\tau}|\vartheta_t|^2 \rmd t +\int_0^{\tau}|\bar{B}_t(Y_t)|^2 \rmd t \Big] \\
= & \dfrac{1}{2\sigma^2} \E_{\Q}\Big[\dfrac{|Z_0|^2}{\int_0^{\tau} \rme^{-2L_B t} \rmd t}\Big] +\dfrac{1}{2\sigma^2}\int_0^{\tau} \int_{\R^d}|\bar{B}_t(y)|^2 p_t( y)\rmd y \rmd t.
\end{aligned}
\end{equation*}
The equation $\E_{\Q}[|Z_0|^2]=\E_{\P}[R_{\tau}]\cdot \E_{\P}|Z_0|^2=\E_{\P}|Z_0|^2=W_2^2(P_0, P_{\infty})$ and \eqref{equa6} yield
\begin{equation*}
  \begin{aligned}
  \E_{\P}[R_{\tau} \log R_{\tau}]
  \leq 
  C_{HW}' W_2^2(P_0, P_{\infty}),
  \end{aligned}
\end{equation*}
where $C_{HW}':=\frac{1}{2\sigma^2\int_0^{\tau} \rme^{-2L_B t} \rmd t} +\frac{1}{2\sigma^2} C_B$.
Therefore, \eqref{LH} imples
\begin{equation*}
\int_{\R^d} \log f \rmd P_{\tau} \leq \log \int_{\R^d} f \rmd P_{\infty} +C_{HW}' W_2^2(P_0, P_{\infty}).
\end{equation*}

Letting $f=\dfrac{\rmd P_{\tau}}{\rmd P_{\infty}}$, we have
\begin{equation*}
H(P_{\tau}|P_{\infty}) \leq C_{HW}' W_2^2(P_0, P_{\infty}).
\end{equation*}
A direct calculation gives
\begin{equation*}
  H_g(P_{\tau}|P_{\infty}) \leq \dfrac{\gamma_2}{b_1} H(P_{\tau}|P_{\infty})
\end{equation*}
and
\begin{equation*}
  b_1 W_2^2(P_0, P_{\infty}) \leq W_h^2(P_0, P_{\infty}) \leq b_2 W_2^2(P_0, P_{\infty}).
\end{equation*}
It follows that
\begin{equation*}
  H_g(P_{\tau}|P_{\infty}) \leq \gamma_2 C_{HW}' W_h^2(P_0, P_{\infty}) =C_{HW} W_h^2(P_0, P_{\infty}).
\end{equation*}

Combining \eqref{Hconver} and \eqref{Tineq}, we obtain
\begin{equation*}
  H_g(P_t|P_{\infty}) \leq \rme^{-\frac{t-\tau}{C_{HI}}} H_g(P_{\tau}|P_{\infty}) \leq \rme^{-\frac{t-\tau}{C_{HI}}}C_{HW} W_h^2(P_0, P_{\infty})
\end{equation*}
and
\begin{equation*}
  W_h^2(P_t|P_{\infty}) \leq C_T H_g(P_t|P_{\infty}) \leq \rme^{-\frac{t-\tau}{C_{HI}}} C_T C_{HW} W_h^2(P_0, P_{\infty}),
\end{equation*}
which completes the proof.
\end{proof}

\section*{Acknowledgements}
We are grateful to Professor Nicola Gigli for insightful comments.
This work is supported by National Key R\&D Program of China (No. 2023YFA1009200), NSFC (Grants 12531009 and 11925102).


\end{document}